	\setlist{nosep} \usepackage{color}
\crefname{hypothesis}{Hypothesis}{Hypotheses}
\let\oldnl\nl\newcommand{\nonl}{\renewcommand{\nl}{\let\nl\oldnl}}\makeatother
\newcommand{\h}{\hat}
\newcommand{\tcb}{\textcolor{black}}
\newcommand{\TheTitle}{Nonsymmetric Reduction-based\\Algebraic Multigrid}
\newcommand{\TheAuthors}{Manteuffel, M\"unzenmaier, Ruge, and Southworth }
\title{{\TheTitle}\thanks{This research was conducted with Government support under and awarded by DoD, Air Force Office of Scientific Research, National Defense Science and Engineering Graduate (NDSEG) Fellowship, 32 CFR 168a. This work was performed under
the auspices of the U.S. Department of Energy under grant numbers (SC) DE-FC02-03ER25574 and (NNSA) DE-NA0002376, and
  Lawrence Livermore National Laboratory under contract B614452.
  }}
\author{  Thomas~A.~Manteuffel
  \thanks{Department of Applied Mathematics,
          University of Colorado at Boulder.}
  \and
  Steffen M\"unzenmaier
  \thanks{Fakult\"at f\"ur Mathematik, Universit\"at Duisburg-Essen.}
  \and
  John Ruge
  \footnotemark[2]
  \and
  Ben~S.~Southworth
  \thanks{Department of Applied Mathematics,
          University of Colorado at Boulder
          (\email{ben.s.southworth@gmail.com}).}
}
\ifpdf\hypersetup{  pdftitle={\TheTitle},
  pdfauthor={\TheAuthors}
}
\begin{document}
\maketitle

\begin{abstract}
Algebraic multigrid (AMG) is often an effective solver for symmetric positive definite (SPD) linear systems resulting
from the discretization of general elliptic PDEs, or the spatial discretization of parabolic PDEs.
However, convergence theory and most variations of AMG rely on $A$ being SPD. Hyperbolic PDEs, which arise often in large-scale
scientific simulations, remain a challenge for AMG, as well as other fast linear solvers, in part because the resulting
linear systems are often highly nonsymmetric. Here, a novel convergence framework is developed for nonsymmetric,
reduction-based AMG, and sufficient conditions derived for $\ell^2$-convergence of error and residual. In particular,
classical multigrid approximation properties are connected with reduction-based measures to develop a robust
framework for nonsymmetric, reduction-based AMG. 

Matrices with block-triangular structure are then recognized as being amenable to reduction-type algorithms, and
a reduction-based AMG method is developed for upwind discretizations of hyperbolic PDEs, based on the concept
of a Neumann approximation to ideal restriction ($n$AIR). $n$AIR can be seen as a variation of local AIR ($\ell$AIR)
introduced in previous work, specifically targeting matrices with triangular structure. Although less versatile than $\ell$AIR,
setup times for $n$AIR can be substantially faster for problems with high connectivity. $n$AIR is shown to be an effective
and scalable solver of steady state transport for discontinuous, upwind discretizations, with unstructured meshes,
and up to 6th-order finite elements, offering a significant improvement over existing AMG methods. 
$n$AIR is also shown to be effective on several classes of ``nearly triangular'' matrices, resulting from
curvilinear finite elements and artificial diffusion.
\end{abstract}

\section{Introduction} \label{sec:intro}

Solving large, sparse linear systems is fundamental to the numerical solution of partial differential equations (PDEs). For
high-dimensional PDEs, even a moderate resolution of the discrete problem can lead to enormous problem sizes, which
require highly efficient, parallel solvers. Ultimately, it is important that a solver is
both \textit{algorithmically scalable} (\textit{fast}), with a cost in floating point operations linear or near-linear with the number of
unknowns, and \textit{scalable in parallel} (\textit{scalable}), where these operations can be efficiently distributed across many processors.
For symmetric positive definite (SPD) matrices, such as those that arise in the discretization of elliptic PDEs or the spatial
discretization of a parabolic PDE in time, a number of fast, scalable, iterative or direct methods have been developed. However,
there remains a lack of fast, scalable solvers for the highly nonsymmetric matrices that arise in the discretization of
hyperbolic PDEs and full space-time discretizations of general PDEs. A subset of such highly nonsymmetric matrices are
block-triangular matrices, with blocks that are small enough to invert directly, which is one focus of this work. 

For PDEs of elliptic type, algebraic multigrid (AMG) is among the fastest class of linear solvers. When applicable, AMG converges
in linear complexity with the number of degrees-of-freedom (DOFs), and scales in parallel like $O(\log_2(P))$, up to hundreds of
thousands of processors, $P$ \cite{Baker:2012ko}. Originally, AMG was designed for essentially SPD linear systems, and convergence of AMG
is relatively well understood in the SPD setting \cite{Brandt:1985um,Falgout:2004cs,Vassilevski:2008wd,ideal18,
brannick2018optimal}. Nonsymmetric matrices
pose unique difficulties for AMG in theory and in practice. In particular, coarse-grid correction, a fundamental part of AMG's fast
convergence, is generally not a contraction in the nonsymmetric setting, meaning that coarse-grid correction can
increase error. There have been efforts to develop AMG theory and methods in the nonsymmetric setting \cite{Brezina:2010dm,
Manteuffel:2017,Notay:2000vy,Sala:2008cv,Wiesner:2014cy,Notay:2000vy,Lottes:2017jz,Mense:2008gj,Notay:2010em,notay2018}.
However, the theoretical understanding of nonsymmetric AMG remains limited, with very few convergence bounds proved in norm,
and there has yet to be a robust and scalable AMG solver for highly nonsymmetric problems.

For the most part, previous work on nonsymmetric AMG has appealed to traditional AMG thought, where coarse-grid
correction captures (right) singular vectors associated with small singular values and relaxation attenuates error
associated with large singular values. Here, we take a different, reduction-based approach, appealing to the premise
that certain ``ideal'' restriction and interpolation
operators can lead to a reduction-based (nonsymmetric) AMG method. Although a reduction-based solver is not a new
concept, here we recognize (i) an important class of linear systems for which reduction can be highly effective, and (ii) 
develop a theoretical framework explaining the convergence of nonsymmetric reduction-based AMG in norm.

Background on sparse triangular systems, reduction, and reduction-based AMG is given in Section \ref{sec:background}.
A novel convergence framework is then developed for nonsymmetric, reduction-based AMG
in Section \ref{sec:analysis}, including sufficient conditions for $\ell^2$-convergence of error and residual,
and a formal connection of reduction-based AMG to classical multigrid approximation properties.
Section \ref{sec:tri} then
recognizes block-triangular and near-triangular matrices to be well-suited for reduction-based AMG, and discusses
such operators in the context of discontinuous discretizations of advection.
In particular, for hyperbolic-type PDEs, a Neumann approximation to the ideal restriction operator ($n$AIR) provides
an accurate, sparse approximation, and an AMG algorithm referred to as $n$AIR is developed based on this principle. 
This is a similar
approach to that developed in \cite{air2} ($\ell$AIR), but $n$AIR can offer setup-times that are orders of magnitude faster than $\ell$AIR
in some cases. Steady state transport is used as a model problem, which arises in large-scale simulation of neutron and radiation
transport \cite{Adams:2013to, Bailey:2009tk,Colomer:2013iv,Hawkins:2012wb, Park:2013ju}. Results in Section \ref{sec:results}
show that $n$AIR outperforms current state-of-the art methods, and is able to attain an order-of-magnitude reduction in residual in only
1--2 iterations, even for high-order discretizations on unstructured grids.

\section{Reduction-based algebraic multigrid}\label{sec:background}

\subsection{Triangular matrices and parallelism}\label{sec:background:tri}

Sparse matrices with triangular structure arise in a number of interesting settings. In contrast to elliptic and parabolic PDEs,
the solution of hyperbolic PDEs lies on characteristic curves, and the solution at any point depends only on the solution upwind
along its given characteristic. This allows for very steep gradients or ``fronts'' to propagate through the domain.
Due to such behavior, discontinuous, upwind discretizations are a natural approach to discretizing many hyperbolic 
PDEs \cite{Brezzi:2004hf,Morel:2005tv,Morel:2007jj,Ragusa:2012gn}. For a fully upwinded discretization, the resulting matrix
has a block-triangular structure in some (although potentially unknown) ordering. Implicit time-stepping schemes or steady state
solutions then require the solution of such linear systems. There has also been growing interest in parallel-in-time solvers.
Although most work on these has been geometric in nature, one can also consider algebraically solving the sparse
matrix equations associated with a full space-time discretization. Such discretizations using an explicit
time-stepping scheme (for any PDE) result in triangular matrices, and an implicit time-stepping scheme coupled with an
upwind discretization of a hyperbolic PDE also leads to a block triangular matrix. 

Solving linear systems with block-triangular structure is easy in serial using a forward or backward solve.
However, there are cases where a block-triangular solve arises in the parallel setting. In particular, any time
an upwind advective discretization is either (i) part of a larger PDE discretization that cannot be stored on a small number of
processors, or (ii) coupled to other variables that are not inverted easily and require parallel preconditioners, a triangular-type
solve is necessary in parallel. Scheduling algorithms have been developed for sparse matrices that 
can add some level of parallelism to this process, but such algorithms are primarily relevant for shared memory
environments \cite{Alvarado:1993bl,Anderson:1989,Li:2013dm,Liu:2016ew}, and, from the perspective of simulation
of PDEs, efficiency in a distributed-memory environment is fundamental. In the simplest case of a perfectly structured processor
grid of squares/cubes over some $d$-dimensional domain, and a fixed, constant direction of flow, a forward solve in parallel scales like $O(P^{1/d})$ for
$P$ processors \cite{Bailey:2009tk}. However, for non-constant flow or non-uniform processor grids with respect to the flow and domain, this
convergence can suffer. In fact, for \textit{any} processor configuration over a given domain, it is straightforward 
to construct a velocity field for advection such that a distributed forward solve takes $P$ communication steps to
complete. Even if each step of this process is fast, linear or even square-root scaling in $P$ is poor parallel performance.

Iterative-type methods offer an alternative (potentially) more amenable to parallelism. 
However, Krylov and most other traditional iterative-type methods are
generally either divergent or too slow (with respect to convergence) to be considered a tractable approach, and 
solving large, sparse, block-triangular linear systems in (distributed-memory) parallel environments remains a difficult
task. Geometric multigrid has been applied to hyperbolic PDEs and upwind discretizations using the well-known line-smoother
approach \cite{Oosterlee:1998ih, Amarala:2013bx, Yavneh:1998fw}, but such an approach requires a fixed/known
direction over which to relax and has limits in terms of parallel scalability. When considering time-dependent hyperbolic
PDEs, explicit time-stepping schemes can be used to avoid solving linear systems. But, explicit schemes suffer from stability
constraints, such as the Courant-Friedrichs-Lewy (CFL) condition, which often require extremely small time steps, a process
that is sequential and can limit performance in the parallel setting. AMG is known to scale like $O(\log P)$ in parallel,
and an AMG solver that is effective on block triangular and near-triangular matrices would overcome many of the aforementioned
difficulties.

\subsection{Reduction-based AMG}\label{sec:background:amg}

Reduction consists of solving of a problem by equivalently solving multiple smaller problems. In thinking about triangular
systems, a direct forward or backward solve is a reduction algorithm:  starting with a system of size $n\times n$, eliminate
one DOF, and reduce the problem to size $(n-1)\times(n-1)$. Although this is a sequential algorithm, it suggests that
reduction is an effective and tractable approach for triangular systems, which we demonstrate in this paper. 

Given an $n\times n$ matrix $A$, suppose the $n$ DOFs are partitioned into $n_c$ C-points and $n_f$ F-points. For explanation, 
assume that the unknowns have been ordered with the F-points followed by the C-points. In this context, a well-known
example of reduction is a $2\times 2$ block LDU decomposition:
\begin{align}\label{eq:red3}
\begin{bmatrix} A_{ff} & A_{fc} \\ A_{cf} & A_{cc}\end{bmatrix}^{-1} & = 
	\begin{bmatrix} I & -A_{ff}^{-1}A_{fc} \\ 0 & I\end{bmatrix}
	\begin{bmatrix} A_{ff}^{-1} & 0 \\ 0 & \mathcal{K}_A^{-1}\end{bmatrix}
	\begin{bmatrix} I & 0 \\ -A_{cf}A_{ff}^{-1} & I\end{bmatrix}.
\end{align}
Here, solving $A\mathbf{x}=\mathbf{b}$ is reduced to solving two systems, $A_{ff}\in\mathbb{R}^{n_F\times n_F}$ and
the Schur complement, $\mathcal{K}_A := A_{cc}-A_{cf}A_{ff}^{-1}A_{fc}\in\mathbb{R}^{n_C\times n_C}$. LDU
decompositions assume that the action of $A_{ff}^{-1}$ is available to compute the action of the first and third
matrix blocks in \eqref{eq:red3}, while in practice it is typically not. However, approximating such a decomposition
is the goal behind numerous preconditioners. In fact, a two-level reduction-based AMG method can be posed as a
variant of a block LDU decomposition, which is what we develop here by algebraically approximating the operators
that lead to reduction in \eqref{eq:red3}: $A_{cf}A_{ff}^{-1}$ and $A_{ff}^{-1}A_{fc}$.

Algebraic multigrid applied to the linear system $A\mathbf{x} = \mathbf{b}$, $A\in\mathbb{R}^{n\times n}$, consists of
two processes: relaxation and coarse-grid correction, designed to be complementary in the sense that they effectively
reduce error associated with different  parts of the spectrum of $A$. Relaxation often takes the form
\begin{align}
\mathbf{x}_{k+1} = \mathbf{x}_k + M^{-1}(\mathbf{b} - A\mathbf{x}_k),\label{eq:relax}
\end{align}
where $M^{-1}$ is some approximation to $A^{-1}$ such that the action of $M^{-1}$ can be easily computed.
Coarse-grid correction typically takes the form
\begin{align}
\mathbf{x}_{k+1} = \mathbf{x}_k + P(RAP)^{-1}R(\mathbf{b} - A\mathbf{x}_k), \label{eq:cgc}
\end{align}
where $P\in\mathbb{R}^{n\times n_c}$ and $R\in\mathbb{R}^{n_c\times n}$ are interpolation and restriction operators,
respectively, between $\mathbb{R}^n$ and the next coarser grid in the AMG hierarchy, $\mathbb{R}^{n_c}$, and back. 
Denote the projection
$\Pi := P(RAP)^{-1}RA$. A two-level $V(1,1)$-cycle is then given by combining coarse-grid
correction in \eqref{eq:cgc} with pre- and post-relaxation steps as in \eqref{eq:relax}, resulting in a two-grid
error propagation operator of the form
\begin{align*}
E_{TG} = (I - M_{\textnormal{post}}^{-1}A)(I - \Pi)(I - M_{\textnormal{pre}}^{-1}A).
\end{align*}

A classical AMG approach is used here, where a CF-splitting of DOFs defines the coarse grid \cite{Brandt:1985um,ruge:1987}.
Operators $A,P$, and $R$ can then be written in block form:
\begin{align}
A = \begin{bmatrix} A_{ff} & A_{fc} \\ A_{cf} & A_{cc}\end{bmatrix}, \hspace{3ex}
P = \begin{bmatrix} W \\ I\end{bmatrix}, \hspace{3ex}
R = \begin{bmatrix} Z & I\end{bmatrix}, \label{eq:block}
\end{align}
where $W\in\mathbb{R}^{n_f\times n_c}$ interpolates to F-points via linear combinations
of coarse-grid DOFs, and $Z\in\mathbb{R}^{n_c\times n_f}$ restricts F-point residuals. Note
that \eqref{eq:block} implicitly assumes the same CF-splitting for $R$ and $P$, although
the sparsity patterns for nonzero elements of $Z^T$ and $W$ may be different. For notation, denote the
coarse-grid operator:\footnote{$\mathcal{K}$ is used to denote the coarse-grid operator instead of the traditional notation,
$A_c,$ to avoid confusion with subscripts denoting C-points.}
\begin{align}
\mathcal{K} :&= RAP = ZA_{ff}W + A_{cf}W + ZA_{fc}+A_{cc}. \label{eq:rap}
\end{align}

Define the ``ideal restriction'' and ``ideal interpolation'' operators as
\begin{align*}
R_{\textnormal{ideal}} := \begin{bmatrix} -A_{cf}A_{ff}^{-1} & I \end{bmatrix} ,\hspace{3ex} 
P_{ideal} := \begin{bmatrix} -A_{ff}^{-1}A_{fc}  \\ I \end{bmatrix},
\end{align*}
These operators define the LDU reduction in \eqref{eq:red3}. Note that $\mathcal{K} := \mathcal{K}_A$ with $P=P_{\textnormal{ideal}}$
\textit{or} $R = R_{\textnormal{ideal}}$, independent of the other. Ideal interpolation has been explored in the context
of reduction-based geometric multigrid methods \cite{Foerster:1981,Ries:1983,Falgout:2014el,bno:16},
and is well-motivated under classical AMG theory for SPD matrices, where it is optimal in a certain sense with respect
to two-grid convergence \cite{Falgout:2004cs,ideal18}. Ideal restriction was also discussed in the context of reduction
in \cite{Falgout:2004cs} and, in \cite[Section 2.3]{air2}, shown to be the unique restriction operator that gives an exact
coarse-grid correction at C-points, independent of interpolation. This result, along with a corollary on ideal interpolation
\cite[Section 2.2]{air2}, are summarized in the following results. 

\begin{lemma}[Ideal restriction]\label{th:ideal}
For a given CF-splitting, assume that $A_{ff}$ is nonsingular and let $A,P$, and $R$ take the block form as given in
\eqref{eq:block}. Then, an exact coarse-grid correction at C-points is attained for all $\mathbf{e}$ if
and only if $R = R_{\textnormal{ideal}}$. Furthermore, the error in coarse-grid correction is given by
\begin{align}
(I - \Pi)\mathbf{e} & = \begin{bmatrix} \mathbf{e}_f - W\mathbf{e}_c \\ \mathbf{0}\end{bmatrix}.\label{eq:idealerr}
\end{align}
A coarse-grid correction using $R_{\textnormal{ideal}}$ followed by an exact solve on F-points, results in an exact
two-grid solver, independent of $W$. 
\end{lemma}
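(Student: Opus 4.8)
The plan is to read everything off the coarse-grid error-propagation identity \eqref{eq:cpt}, which already reduces the question to how $\Pi$ acts on vectors supported only on F-points. Set $\delta\mathbf{e}_f := \mathbf{e}_f - W\mathbf{e}_c$ and let $\mathbf{d}$ denote the vector with F-block $\delta\mathbf{e}_f$ and zero C-block, so that \eqref{eq:cpt} reads $(I-\Pi)\mathbf{e} = (I-\Pi)\mathbf{d}$. First I would extract the C-point block of $\Pi$: from the block form \eqref{eq:block}, $\begin{bmatrix}\mathbf{0} & I\end{bmatrix}P = I$, so the C-block of $\Pi\mathbf{v}$ is $(RAP)^{-1}RA\mathbf{v}$ for any $\mathbf{v}$. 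Taking $\mathbf{v}=\mathbf{d}$ and $R=\begin{bmatrix}Z & I\end{bmatrix}$ gives $RA\mathbf{d} = (ZA_{ff}+A_{cf})\delta\mathbf{e}_f$, so the C-block of $(I-\Pi)\mathbf{e}$ equals $-(RAP)^{-1}(ZA_{ff}+A_{cf})\delta\mathbf{e}_f$.

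Next I would establish the equivalence. Since $\delta\mathbf{e}_f$ ranges over all of $\mathbb{R}^{n_f}$ as $\mathbf{e}$ ranges over $\mathbb{R}^n$, an exact C-point correction for every $\mathbf{e}$ holds if and only if $(RAP)^{-1}(ZA_{ff}+A_{cf})\delta\mathbf{e}_f = \mathbf{0}$ for all $\delta\mathbf{e}_f$. Because coarse-grid correction is only defined when $RAP$ is nonsingular, this is equivalent to $ZA_{ff}+A_{cf} = 0$; since $A_{ff}$ has full rank, this forces $Z = -A_{cf}A_{ff}^{-1}$, i.e.\ $R = R_{\textnormal{ideal}}$, and conversely this $Z$ makes the expression vanish identically. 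The point meriting care is that the ``only if'' direction is implicitly restricted to those $R$ for which $RAP$ is invertible, since otherwise $\Pi$, and hence the very notion of an exact correction, is undefined.

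For the error formula \eqref{eq:idealerr}, with $R = R_{\textnormal{ideal}}$ the computation above already gives $RA\mathbf{d} = (ZA_{ff}+A_{cf})\delta\mathbf{e}_f = \mathbf{0}$, hence $\Pi\mathbf{d} = \mathbf{0}$ and \eqref{eq:cpt} collapses to $(I-\Pi)\mathbf{e} = \mathbf{d} = \begin{bmatrix}\mathbf{e}_f - W\mathbf{e}_c \\ \mathbf{0}\end{bmatrix}$. Finally, for the exact two-grid claim, I would write the error-propagation operator of an exact solve on the F-points (relaxation with $M^{-1} = \operatorname{diag}(A_{ff}^{-1},0)$) as $I - M^{-1}A = \begin{bmatrix}0 & -A_{ff}^{-1}A_{fc} \\ 0 & I\end{bmatrix}$, observe that it annihilates every vector whose C-block is zero, and compose it with \eqref{eq:idealerr} to obtain the zero operator, independent of $W$.

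I do not expect a genuine obstacle here: the argument is block bookkeeping together with the full-rank hypothesis on $A_{ff}$. The two spots that warrant attention are (i) the well-definedness caveat ($RAP$ nonsingular) in the ``only if'' direction, and (ii) making explicit that $\delta\mathbf{e}_f$ is a free parameter, which is what upgrades the pointwise condition to the operator identity $ZA_{ff} + A_{cf} = 0$.
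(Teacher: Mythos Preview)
Your proposal is correct and matches the paper's intended approach. The paper does not spell out a proof of this lemma; it derives the identity \eqref{eq:cpt} (citing \cite{air2}) and states that the lemma follows, and your argument is precisely the natural elaboration of that setup: extract the C-block of $(I-\Pi)\mathbf{e}$ from \eqref{eq:cpt}, reduce exactness at C-points to $ZA_{ff}+A_{cf}=0$ via the surjectivity of $\delta\mathbf{e}_f$ and invertibility of $A_{ff}$, then read off \eqref{eq:idealerr} and compose with the exact F-solve.
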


\begin{corollary}[Ideal interpolation]\label{cor:ideal}
For a given CF-splitting, assume that $A_{ff}$ is nonsingular and let $A,P$, and $R$ take the block form as given in
\eqref{eq:block}. Then, an exact solve on F-points, followed by a coarse-grid correction using $P := P_{ideal}$,
yields an exact two-level solver, independent of $Z$.
\end{corollary}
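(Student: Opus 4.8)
The plan is to compose the error-propagation operators of the two constituent steps and show the product is the zero operator. Writing the two-grid error propagation as $(I-\Pi)(I-M^{-1}A)$, where $M^{-1}$ implements the exact F-point solve and $\Pi := P_{ideal}(RAP_{ideal})^{-1}RA$ is the coarse-grid correction projection built with $P = P_{ideal}$ and an arbitrary block-form restriction $R = \begin{bmatrix} Z & I\end{bmatrix}$, it suffices to show $(I-\Pi)(I-M^{-1}A)\mathbf{e} = \mathbf{0}$ for every $\mathbf{e}$.

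First I would describe the exact F-point solve. It is a relaxation step of the form \eqref{eq:relax} with $M^{-1}$ the block matrix whose $(f,f)$ block is $A_{ff}^{-1}$ and whose remaining blocks are zero, so that its error-propagation operator is
\begin{align*}
I - M^{-1}A = \begin{bmatrix} 0 & -A_{ff}^{-1}A_{fc} \\ 0 & I \end{bmatrix}.
\end{align*}
Applied to an arbitrary error $\mathbf{e} = \begin{bmatrix}\mathbf{e}_f \\ \mathbf{e}_c\end{bmatrix}$, this leaves $\mathbf{e}_c$ fixed and overwrites the F-part with $-A_{ff}^{-1}A_{fc}\mathbf{e}_c$; that is, the post-relaxation error is exactly $P_{ideal}\mathbf{e}_c$, which lies in the range of $P_{ideal}$. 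Phrased in the language of Lemma \ref{th:ideal}, since the interpolation weights for $P = P_{ideal}$ are $W = -A_{ff}^{-1}A_{fc}$, the exact F-solve forces the interpolation defect $\delta_f := \mathbf{e}_f - W\mathbf{e}_c$ to vanish.

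Second, I would show that coarse-grid correction with $P = P_{ideal}$ annihilates error of this form. Since $AP_{ideal} = \begin{bmatrix}\mathbf{0} \\ \mathcal{K}_A\end{bmatrix}$, one has $RAP_{ideal} = \mathcal{K}_A$ for every $R = \begin{bmatrix} Z & I\end{bmatrix}$, so $\Pi P_{ideal} = P_{ideal}\mathcal{K}_A^{-1}(RAP_{ideal}) = P_{ideal}$ and hence $(I-\Pi)P_{ideal} = 0$. Composing the two steps, $(I-\Pi)(I-M^{-1}A)\mathbf{e} = (I-\Pi)P_{ideal}\mathbf{e}_c = \mathbf{0}$ for all $\mathbf{e}$, which is the claim. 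Equivalently — and more in the spirit of the preceding development — one can invoke \eqref{eq:cpt} directly with $\delta_f = \mathbf{0}$, whereupon its right-hand side is identically zero. Either route makes clear that the conclusion is independent of $Z$, since $Z$ enters only through the (now vanishing) defect term.

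I do not anticipate a genuine obstacle: the two ingredients — that exact F-relaxation leaves error in the range of $P_{ideal}$, and that $RAP_{ideal}$ collapses to the ideal Schur complement $\mathcal{K}_A$ regardless of $Z$ — are both immediate from the block algebra already exploited in Lemma \ref{th:ideal} and its surrounding discussion, and the result is essentially the ``dual'' of that lemma with the roles of $R$/$P$ and the order of the two sweeps interchanged. The only points requiring minor care are (i) keeping the order of operations straight (here the F-solve \emph{precedes} coarse-grid correction, in contrast to Lemma \ref{th:ideal}), and (ii) confirming $\Pi$ is well defined, i.e.\ that $RAP_{ideal} = \mathcal{K}_A$ is invertible; this holds whenever $A$ is nonsingular, since $\det A = \det A_{ff}\,\det\mathcal{K}_A$ and $A_{ff}$ is assumed full rank.
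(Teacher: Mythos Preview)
Your proposal is correct and follows essentially the same approach as the paper: both show that exact F-relaxation forces $\mathbf{e}_f = -A_{ff}^{-1}A_{fc}\mathbf{e}_c$ (i.e., the post-relaxation error lies in the range of $P_{ideal}$), and then use $RAP_{ideal} = \mathcal{K}_A$ independently of $Z$ to conclude that coarse-grid correction eliminates this error. The paper carries out the block computation explicitly, while you phrase it via the projection identity $(I-\Pi)P_{ideal}=0$, but the underlying algebra is the same.
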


Thus, in the nonsymmetric setting, ideal transfer operators are ``ideal'' in a reduction sense: when coupled with an exact solve on
F-points, $R_{\textnormal{ideal}}$ and $P_{ideal}$ each lead to an exact two-level method, independent of the accompanying interpolation
and restriction operators, respectively. Note that the ordering of solving the coarse- and fine-grid problems is important: 
\tcb{in order to be an exact reduction,}
the F-point solve must \textit{follow} coarse-grid correction with $R_{\textnormal{ideal}}$, while the F-point solve must \textit{precede}
coarse-grid correction with $P_{ideal}$.

\subsection{Relation to existing methods}\label{sec:background:other}

Schur-complement preconditioning and reduction-based solvers are not new to the literature. Numerous algorithms
have been based on a block LDU decomposition \eqref{eq:red3} and approximate Schur complement (for example,
\cite{ Carvalho:2001ib, Mandel:1990kv,Mense:2008gj,Mense:2008vx,
Notay:2000vy,Saad:1999km,benzi2005numerical}). Reduction has also been studied in the multigrid and AMG
context, originally in the geometric setting \cite{Foerster:1981,Ries:1983}, more recently algebraically
\cite{Brannick:2010uq,MacLachlan:2006gt}, and also
as the basis for the multigrid reduction-in-time (MGRIT) parallel-in-time method \cite{Falgout:2014el,Dobrev:2016vc,bno:16}.
MGRIT is designed for nonsymmetric problems, but is geometric in nature and relies on the very specific matrix structure
that arises in time integration, more or less a block 1D advection problem. The AMG developments in
\cite{Brannick:2010uq,MacLachlan:2006gt} are fully algebraic and reduction-based, but assume a Galerkin coarse grid,
meaning restriction is defined as $R = P^T$. For the highly nonsymmetric systems considered here, this is typically not
a good choice \cite{nonsymm,air2}, motivating a Petrov-Galerkin method, where $R\neq P^T$. Unfortunately, choosing
$R \neq P^T$ also introduces new difficulties, because if $R$ and $P$ are not ``compatible'' in some sense, the norm of
coarse-grid correction can be $\gg 1$ \cite{nonsymm,air2}, leading to a divergent method. 

Approximating the ideal restriction or interpolation operators has also been considered in other (non-reduction-based)
AMG methods, including \cite{Manteuffel:2017,Olson:2011fg,Wiesner:2014cy}. Also motivated in a block-LDU sense,
ideal restriction and interpolation are approximated in \cite{Wiesner:2014cy} by performing a constrained minimization
over a fixed sparsity pattern for $R$ and $P$. A similar constrained minimization approach
for nonsymmetric systems was used in \cite{Manteuffel:2017,Olson:2011fg}, where ideal operators of
$A^*A$ and $AA^*$ for $P$ and $R$, respectively, are approximated using a constrained minimization.
In these cases, the solvers appeal to more classical convergence
theory by enforcing constraints to interpolate certain (known) vectors associated with small singular values exactly.

The AIR algorithm, developed here and in \cite{air2}, takes a somewhat converse approach. Building on the discussion
in Section \ref{sec:background:amg}, AIR relies on an accurate approximation to $R_{ideal}$, and couples this
with an accurate F-relaxation scheme to achieve a reduction-based algorithm. In particular, the focus is on reduction
through the ideal restriction operator. In \cite[Lemma 1]{air2}, it is shown that for $R\neq R_{ideal}$, care must
also be taken when building $P$ to ensure a stable coarse-grid correction. Theory developed here indicates that when
$R$ approximates $R_{\textnormal{ideal}}$, building interpolation to accurately capture right singular vectors associated
with small singular values is sufficient for a stable coarse-grid correction, as well as two-grid convergence in the $\ell^2$-
and $A^*A$-norms. How accurately the derived conditions require $P$ to interpolate modes depends on how accurately
$R\approx R_{ideal}$. 

This paper can be seen as a companion paper to the $\ell$AIR method developed in \cite{air2}. The theory
developed here is more general than that in \cite{air2}, and provides rigorous explanation as to why better
interpolation methods are needed when considering advection-diffusion-reaction \cite{air2} compared with pure
advection-reaction (here and \cite{air2}). Conversely, the method developed here is less general than $\ell$AIR,
effective primarily on the purely advective or nearly advective case, but as a result, also has a significantly cheaper setup
cost. The basic idea is that $\ell$AIR approximates ideal restriction by solving many small, dense, linear
systems, which can be moderately expensive as the problem size grows. $n$AIR recognizes that, for advective-type
discretizations, a similar approximation can be obtained by a few sparse matrix products. A detailed algorithmic
comparison of $n$AIR and $\ell$AIR can be found in \cite{air2}.

\tcb{Finally, theory on convergence of nonsymmetric AMG in the $\sqrt{A^*A}$-norm was developed simultaneously
with this work in \cite{nonsymm}. There, approximation properties are assumed on $R$ and $P$. Here, we replace
one approximation property assumption on either $R$ or $P$ with a measure of distance of $R$ or $P$ from ideal
restriction or interpolation, respectively, and consider convergence in the $\ell^2$- and $A^*A$-norms. If this
distance cannot be made small, then one should revert back to a framework as in \cite{nonsymm}, focusing on
more traditional approximation properties for $P$ and $R$. }

\section{Convergence of reduction-based AMG}\label{sec:analysis}

\subsection{Framework}\label{sec:analysis:framework}

Let $A$, $P$, and $R$ take the form introduced in \eqref{eq:block}.
Error propagation of coarse-grid correction is given by $I - \Pi$, where
$\Pi = P(RAP)^{-1}RA$ is a (generally non-orthogonal in any known inner product) projection onto the range of $P$ (Section \ref{sec:background}).
The motivation for AIR as an AMG algorithm is straightforward. Recall that
ideal restriction gives an exact approximation at C-points, independent of interpolation. Following this with a direct solve on
F-points gives an exact two-level method (Lemma \ref{th:ideal}). Although we do not expect ideal restriction
in practice, here we assume that an accurate approximation to $R_{\textnormal{ideal}}$ leads to an accurate solution
at C-points, which we follow with F-relaxation to distribute this accuracy to F-points.

Let $\Delta_F$ be an approximation to $A_{ff}^{-1}$. First, measures of the accuracy of F-relaxation as well as the
difference between ideal interpolation and restriction and the interpolation and restriction used in practice are defined,
respectively, as
\begin{align*} 
\delta_F &= I-\Delta_FA_{ff},\\ 
\delta_P &= A_{ff}W+A_{fc},\\
\delta_R &= ZA_{ff}+A_{cf}.
\end{align*}
\tcb{
Here, $\delta_R$ and $\delta_P$ are measured relative to $\|A\|$. Throughout the paper it will be assumed that
$A$ has been scaled so that $\|A\|\sim O(1)$. However, $\|A\|$ is explicitly carried through the derivation of bounds 
and proofs of convergence for completeness.
}

The error-propagation matrix associated with F-point relaxation is given by
\begin{align*} 
\mathcal{E}_F = \left[\begin{array}{cc} I & 0 \\ 0 & I \end{array}\right]
	- \left[\begin{array}{cc} \Delta_F & 0 \\ 0 & 0\end{array}\right]
	\left[ \begin{array}{cc} A_{ff} & A_{fc} \\ A_{cf} & A_{cc} \end{array}\right]
= \begin{bmatrix} \delta_F & -\Delta_FA_{fc} \\ \mathbf{0} & I\end{bmatrix}.
\end{align*}
The product of these two error matrices, $\mathcal{E} := \mathcal{E}_F(I-\Pi)$, can be put into a very convenient form \cite{air2},
\begin{align*}
\mathcal{E}
&= \left[\begin{array}{cc} I & 0 \\ 0 & I \end{array}\right]
	-\bigg( \left[\begin{array}{cc} \Delta_F & 0 \\ 0 & 0\end{array}\right]
	+ \left[ \begin{array}{cc} I - \Delta_F A_{ff} & - \Delta_F A_{fc} \\0& I \end{array}\right]
 	\left[ \begin{array}{c} W \\ I \end{array}\right] \\ & \hspace{6ex}
	\mathcal{K}^{-1}  \left[\begin{array}{cc}  Z & I  \end{array}\right] \bigg)
	\left[ \begin{array}{cc} A_{ff} & A_{fc} \\ A_{cf} & A_{cc} \end{array}\right]\\ 
&= \left[\begin{array}{cc} I & 0 \\ 0 & I \end{array}\right]
	- \left[\begin{array}{cc} I & \widehat{W} \\ 0 & I \end{array}\right]
	\left[\begin{array}{cc} \Delta_F & 0 \\ 0 & \mathcal{K}^{-1} \end{array}\right]
	\left[\begin{array}{cc} I & 0 \\ Z & I \end{array}\right]
	\left[ \begin{array}{cc} A_{ff} & A_{fc} \\ A_{cf} & A_{cc} \end{array}\right],
\end{align*}
where
\begin{align} \label{eqn:whW}
\widehat{W} = (I-\Delta_FA_{ff}) W -\Delta_FA_{fc} = \delta_F W - \Delta_FA_{fc}.
\end{align}
If $\Delta_F = A_{ff}^{-1}$, then $\widehat{W}$ becomes ideal interpolation.
The better $\Delta_F$ approximates $A_{ff}^{-1}$, the closer $\widehat{W}$ is to ideal
interpolation.  Here, $\widehat{W}$ is referred to as the ``effective interpolation'' of this method \cite{air2}.

Next, note that $ \mathcal{E}$ has the form
\begin{align}\label{eqn:MinvN}
 \mathcal{E} = I - M^{-1}A = M^{-1}(M-A),
\end{align}
where
\begin{align}
M &= \left[\begin{array}{cc} I & 0 \\ -Z & I \end{array}\right]\left[\begin{array}{cc} \Delta_F^{-1} & 0 \\ 0 & \mathcal{K} \end{array}\right]
\left[\begin{array}{cc} I & -\widehat{W} \\ 0 & I \end{array}\right]\nonumber\\
&= \left[\begin{array}{cc} \Delta_F^{-1} &  A_{fc} -(\Delta_F^{-1}-A_{ff})W\\-Z\Delta_F^{-1}   & \mathcal{K}+ Z\Delta_F^{-1}\widehat{W}\end{array}\right].\label{eq:M0}
\end{align}
A little extra work using \eqref{eq:rap} yields
\begin{align*}
\mathcal{K} + Z\Delta_F^{-1}\widehat{W} &= A_{cc} + A_{cf}W + ZA_{fc} + ZA_{ff}W +Z\Delta_F^{-1}(I-\Delta_FA_{ff}) W -Z A_{fc} \\
&= A_{cc} + A_{cf}W + ZA_{ff}W +Z(\Delta_F^{-1}- A_{ff}) W\\
&= A_{cc} + A_{cf}W + Z\Delta_F^{-1}W.
\end{align*}
Using \eqref{eq:M0},
\begin{align} 
M-A & =  \left[\begin{array}{cc} \Delta_F^{-1} - A_{ff} &  - (\Delta_F^{-1}-A_{ff})W\\
	-(Z\Delta_F^{-1}+A_{cf}) &   (Z\Delta_F^{-1}+A_{cf}) W \end{array}\right] \label{eq:M_A}\\
 & = \left[\begin{array}{c} \Delta_F^{-1} - A_{ff} \\ 
	-(Z\Delta_F^{-1}+A_{cf})\end{array}\right] 
	\left[\begin{array}{cc} I &  - W \end{array}\right].\label{eqn:M-A1}
\end{align}
Similarly, $M^{-1}$ can be expanded as
\begin{align}\nonumber
M^{-1} &= \left[\begin{array}{cc} I & \widehat{W} \\ 0 & I \end{array}\right]
	\left[\begin{array}{cc} \Delta_F & 0 \\ 0 & \mathcal{K}^{-1} \end{array}\right]
	\left[\begin{array}{cc} I & 0 \\ Z & I \end{array}\right] \\ \label{eqn:Minv1}
& = \left[\begin{array}{cc} \Delta_F + \widehat{W}\mathcal{K}^{-1}Z& \widehat{W}\mathcal{K}^{-1} \\ \mathcal{K}^{-1}Z & \mathcal{K}^{-1} \end{array}\right].
\end{align}

As mentioned in \eqref{eqn:MinvN}, the error-propagation matrix is given by $\mathcal{E} = M^{-1}(M-A)$.
The residual-propagation matrix is similar, given by $\mathcal{R} = A\mathcal{E}A^{-1} = (M-A)M^{-1}$. Each
of these can now be assembled to a convenient outer-product form.

For error propagation, combining (\ref{eqn:M-A1}) with (\ref{eqn:Minv1}) gives
\begin{align}
 \mathcal{E} &= \left[\begin{array}{cc} \Delta_F+\widehat{W}\mathcal{K}^{-1}Z & \widehat{W}\mathcal{K}^{-1} \\ \mathcal{K}^{-1}Z & \mathcal{K}^{-1} \end{array}\right]
	\left[\begin{array}{c} \Delta_F^{-1} - A_{ff} \nonumber\\
	-(Z\Delta_F^{-1}+ A_{cf})  \end{array}\right] 
	\left[\begin{array}{cc} I &  - W \end{array}\right]\\
&= \left[\begin{array}{c} (I-\Delta_F A_{ff}) - \widehat{W}\mathcal{K}^{-1}(ZA_{ff}+A_{cf}) \nonumber\\
-\mathcal{K}^{-1} (ZA_{ff}+A_{cf}) \end{array}\right]
 \left[\begin{array}{cc} I &  - W \end{array}\right] \\ 
 &= \left[\begin{array}{c} \delta_F - \widehat{W}\mathcal{K}^{-1}\delta_R \\
-\mathcal{K}^{-1} \delta_R \end{array}\right]
 \left[\begin{array}{cc} I &  - W \end{array}\right] .\label{eqn:E_E2}
\end{align}
Likewise, using (\ref{eqn:whW}), (\ref{eqn:M-A1}), and (\ref{eqn:Minv1}), the residual-propagation matrix is given by
\begin{align} 
\mathcal{R} &= \left[\begin{array}{c} \Delta_F^{-1} - A_{ff} \\ -(Z\Delta_F^{-1}+A_{cf})  \end{array}\right] 
	\left[\begin{array}{cc} \Delta_F(I- \delta_P\mathcal{K}^{-1}Z) &-\Delta_F\delta_P\mathcal{K}^{-1} \end{array}\right]. \label{eqn:E_R2}
\end{align}
The outer-product formulation provides a natural representation of powers of $\mathcal{E}^k$ and
$\mathcal{R}^k$, where
\begin{align}
\mathcal{E}^k &=  \left[\begin{array}{c} \delta_F - \widehat{W}\mathcal{K}^{-1}\delta_R \\ -\mathcal{K}^{-1} \delta_R \end{array}\right]
	G^{k-1} \left[\begin{array}{cc} I &  - W \end{array}\right] , \label{eqn:ETk} \\
\mathcal{R}^k & = \left[\begin{array}{c} \Delta_F^{-1} - A_{ff} \\ -(Z\Delta_F^{-1}+A_{cf})  \end{array}\right] G^{k-1}
	\left[\begin{array}{cc} \Delta_F(I- \delta_P\mathcal{K}^{-1}Z) &-\Delta_F\delta_P\mathcal{K}^{-1} \end{array}\right]. \label{eqn:RTk}
\end{align}
In particular, it is easily verified from \eqref{eqn:whW}, \eqref{eqn:E_E2} and \eqref{eqn:E_R2} that 
\begin{align} \nonumber
G &= \left[\begin{array}{cc} I &  - W \end{array}\right] 
 	\left[\begin{array}{c} \delta_F - \widehat{W}\mathcal{K}^{-1}\delta_R \\ \nonumber
	-\mathcal{K}^{-1} \delta_R \end{array}\right] \\ 
&= \delta_F + \Delta_F\delta_P\mathcal{K}^{-1} \delta_R \label{eqn:G1} \\
&= \delta_F + (I-\delta_F)A_{ff}^{-1}\delta_P\mathcal{K}^{-1} \delta_R  \label{eqn:G2}
\end{align}
is identical for $\mathcal{E}^k$ and $\mathcal{R}^k$. 

This is a fundamental observation for proving convergence-in-norm of reduction-based AMG. In particular, it is often
the case that $\|\mathcal{E}\|$ and/or $\|\mathcal{R}\|$ are greater than one. However, \eqref{eqn:ETk}
and \eqref{eqn:RTk} show that raising error- and residual-propagation to powers is equivalent to considering
powers of a different matrix, $G$ \eqref{eqn:G2}. Thus, bounding $\|G\|$ can lead to a convergent method, 
which is summarized in the following lemma. 

\begin{lemma}\label{lem:conv}
Let $W$, $Z$, and $\Delta_F$ be chosen such that 
\begin{align*}
\|G\| = \| \delta_F + \Delta_F \delta_P\mathcal{K}^{-1}\delta_R \| =\rho <1.0.
\end{align*}
Then, the iteration will converge with bounds
\begin{align*}
\| \mathbf{e}_k \| & \leq \rho^{k-1} \left\|\left[\begin{array}{c} \delta_F - \widehat{W}\mathcal{K}^{-1}\delta_R \\
	-\mathcal{K}^{-1} \delta_R \end{array}\right]\right\| \left\| \left[\begin{array}{cc} I &  - W \end{array}\right] \right\|\|\mathbf{e}_0\|,  \\
\| \mathbf{r}_k \| & \leq \rho^{k-1} \left\|\left[\begin{array}{c} \Delta_F^{-1} - A_{ff} \\ -(Z\Delta_F^{-1}+A_{cf})  \end{array}\right]\right\|
	\left\| \left[\begin{array}{cc} \Delta_F(I- \delta_P\mathcal{K}^{-1}Z) &-\Delta_F\delta_P\mathcal{K}^{-1} \end{array}\right]\right\|\|\mathbf{r}_0\|.
\end{align*}
\end{lemma}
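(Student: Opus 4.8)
The plan is to exploit the outer-product (rank-structured) representations of the error- and residual-propagation matrices in \eqref{eqn:E_E2} and \eqref{eqn:E_R2}. Abbreviate $\mathcal{E} = U_E V$, where $U_E$ denotes the block-column left factor and $V = \left[\begin{array}{cc} I & -W \end{array}\right]$ the block-row right factor appearing in \eqref{eqn:E_E2}; similarly write $\mathcal{R} = P_R\, Q_R$ with $P_R$, $Q_R$ the left and right factors of \eqref{eqn:E_R2}. The first step is to record the two ``inner products'' $V U_E = G$ and $Q_R P_R = G$, where $G$ is the $n_f\times n_f$ matrix in \eqref{eqn:G2}. The identity $V U_E = G$ is exactly the block multiplication carried out in \eqref{eqn:G2}; the identity $Q_R P_R = G$ --- the ``identical for $\mathcal{E}^k$ and $\mathcal{R}^k$'' claim asserted just below \eqref{eqn:G2} --- follows from the same kind of computation applied to the two factors of \eqref{eqn:E_R2}, using \eqref{eqn:whW} and \eqref{eq:rap}. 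Verifying that the middle matrix is literally $G$ in both cases is the one place where a short but slightly fiddly block calculation is required, and is where I expect most of the effort to go.

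Granting these, I would prove by induction on $k\ge 1$ that $\mathcal{E}^k = U_E\, G^{k-1}\, V$ and $\mathcal{R}^k = P_R\, G^{k-1}\, Q_R$, i.e.\ \eqref{eqn:ETk} and \eqref{eqn:RTk}. The base case $k=1$ is \eqref{eqn:E_E2}--\eqref{eqn:E_R2}. For the inductive step, $\mathcal{E}^{k+1} = (U_E G^{k-1} V)(U_E V) = U_E\, G^{k-1}(V U_E)\, V = U_E\, G^{k-1} G\, V = U_E\, G^k\, V$, and identically for $\mathcal{R}$. Since $\mathbf{e}_k = \mathcal{E}^k \mathbf{e}_0$ and $\mathbf{r}_k = \mathcal{R}^k \mathbf{r}_0$, submultiplicativity of the induced $\ell^2$ operator norm together with $\|G^{k-1}\| \le \|G\|^{k-1} = \rho^{k-1}$ gives
\[
\|\mathbf{e}_k\| \le \|U_E\|\,\|G^{k-1}\|\,\|V\|\,\|\mathbf{e}_0\| \le \rho^{k-1}\,\|U_E\|\,\|V\|\,\|\mathbf{e}_0\|,
\]
and, in the same way, $\|\mathbf{r}_k\| \le \rho^{k-1}\,\|P_R\|\,\|Q_R\|\,\|\mathbf{r}_0\|$. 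Writing $U_E$, $V$, $P_R$, $Q_R$ out in their explicit block forms from \eqref{eqn:E_E2} and \eqref{eqn:E_R2} reproduces the two displayed estimates in the statement; since $\rho < 1$, both $\rho^{k-1}\to 0$, so $\mathbf{e}_k\to\mathbf{0}$ and $\mathbf{r}_k\to\mathbf{0}$, which is the claimed convergence.

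Two points I would flag in the write-up. First, only $\|G\| < 1$ is needed: the rank-structured-in-blocks form forces the $k$-th power to decay like $\rho^{k-1}$ with a fixed multiplicative constant $\|U_E\|\,\|V\|$ (resp.\ $\|P_R\|\,\|Q_R\|$), even when $\|\mathcal{E}\|$ or $\|\mathcal{R}\|$ individually exceeds one --- which is the whole point of the reduction-based framework. Second, the only structural hypotheses used are invertibility of $A_{ff}$ (hence of $\mathcal{K} = RAP$) and of $\Delta_F$, already in force in the construction of $M$, $\mathcal{E}$, and $G$; no symmetry or definiteness of $A$ is invoked. The exponent $k-1$ rather than $k$ simply reflects that $U_E$ and $V$ absorb one application of the cycle, with $\mathbf{e}_0$ the error before the first cycle.
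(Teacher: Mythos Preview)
Your proposal is correct and follows essentially the same route as the paper: the paper's proof simply cites \eqref{eqn:ETk}, \eqref{eqn:RTk}, and the preceding discussion, which is exactly the outer-product factorization $\mathcal{E}=U_EV$, $\mathcal{R}=P_RQ_R$ together with the identity $VU_E=Q_RP_R=G$ that you spell out. Your write-up is a faithful (and slightly more detailed) rendering of what the paper leaves implicit; the induction step and the submultiplicativity bound are precisely the intended argument.
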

\begin{proof}
The proof follows from \eqref{eqn:ETk}, \eqref{eqn:RTk}, \eqref{eqn:G1}, and the discussion above. 
\end{proof}
\tcb{
Note that the bound on $\|r_k\|$ is independent of $\|A\|$, but the separate terms are not. This can easily be adjusted
by scaling the second and third terms by $1/\|A\|$ and $\|A\|$, respectively. 
 }

In Section \ref{sec:tri:air}, we show that it is often possible in practice to construct $Z$ such that $\|\delta_R\|$ is quite small \tcb{relative to $\|A\|$}.
Recall that, \tcb{for hyperbolic problems,}  $\|\mathcal{K}^{-1}\| = O(1/h)$. On a relatively coarse grid, it is possible that $\|\delta_R\| \ll O(h)$ and, consequently,
$\| G \|<1.0$, regardless of $W$. In fact, $W=\mathbf{0}$ is a reasonable choice in that context because that choice reduces the
complexity of $\mathcal{K}$, making the algorithm more efficient. However, in general, a better $W$ may be
necessary for convergence. The following section develops conditions for which $\|G\| < 1$. 

\begin{remark}[Pre-F-relaxation]
Because ideal restriction gives an exact coarse-grid correction at C-points, thus far we have considered post-F-relaxation
to distribute this accuracy to F-points. If instead, $P$ is chosen to approximate $P_{\textnormal{ideal}}$, a pre-F-relaxation
scheme may be more appropriate (see Corollary \ref{cor:ideal}). It is easy to show that pre-F-relaxation enjoys the same asymptotic
behavior as post-F-relaxation.
\end{remark}

\subsection{Two-grid convergence}\label{sec:analysis:2grid}

In this section, conditions are derived to bound $\|G\| \leq \rho < 1$. The focus of this work is on problems for which
$\|\delta_F\|$, and $\|\delta_R\|$ or $\|\delta_P\|$, can be made small \tcb{relative to $\|A\|$}. However, for a given family of discretizations, 
$\|\delta_R\|$ and $\|\delta_P\|$ are typically fixed, independent of $h$; that is, the accuracy of approximation to
ideal operators does not improve as $h\to0$. To bound $\|G\|<1$ as $h\to 0$, additional measures
must be taken to account for the term $\delta_P\mathcal{K}^{-1}\delta_R$ in \eqref{eqn:G2}, because
$\|\mathcal{K}^{-1}\| \sim O(1/h)$. To do so, we consider a classical multigrid ``weak approximation property''
for $P$ and $R$.

\begin{definition}[WAP on $P$ with respect to SPD $\mathcal{A}$]
An interpolation operator, $P$, satisfies the \textit{weak approximation property} (WAP) with respect to SPD matrix $\mathcal{A}$, with
constant $C_P$ if, for any $\mathbf{v}$ on the fine grid, there exists a $\mathbf{w}_c$ on the coarse grid such that
\label{assp:waponP}
\begin{align*}
\|\mathbf{v} - P\mathbf{w}_c\|^2 & \leq \frac{C_P}{\|\mathcal{A}\|} \langle \mathcal{A}\mathbf{v},\mathbf{v}\rangle. 
\end{align*}
\end{definition} 

Recall that the Schur complement of $A$ is $\mathcal{K}_A = A_{cc} - A_{cf}A_{ff}^{-1}A_{fc}$. 
Comparing the coarse-grid operator \eqref{eq:rap} with the Schur complement yields
\begin{align}   \nonumber
\mathcal{K} - \mathcal{K}_A &= (ZA_{ff}+ A_{cf}) A_{ff}^{-1}A_{fc} + (ZA_{ff}+ A_{cf})W \\ \nonumber
&= (ZA_{ff}+ A_{cf})A_{ff}^{-1}(A_{ff}W +A_{fc}) \\ \label{eqn:K-K_A}
&= \delta_RA_{ff}^{-1}\delta_P. 
\end{align}
Now, assume that $P$ satisfies the WAP with respect to $\mathcal{A} = A^*A$, with constant $C_P$. Then, for every
$\mathbf{v} = (\mathbf{v}_f^T, \mathbf{v}_c^T)^T$,
\begin{align}\label{eqn:WAP1}
\inf_{\mathbf{w}_c} \left\| \left(\begin{array}{c} \mathbf{v}_f\\\mathbf{v}_c \end{array}\right) - 
	\left(\begin{array}{c} W \\ I \end{array}\right)\mathbf{w}_c \right\|^2
\leq \frac{C_P}{\|A^*A\|} \| A \mathbf{v} \|^2.
\end{align}  
Let $\h{\mathbf{w}}_c$ satisfy the infimum above. Then,
\begin{align*}
\| \mathbf{v}_f - W \mathbf{v}_c\| & \leq \|\mathbf{v}_f - W\h{\mathbf{w}}_c \| + \| W(\h{\mathbf{w}}_c - \mathbf{v}_c) \|\\
	& \leq \|\mathbf{v}_f - W\h{\mathbf{w}}_c \| + \| W\|\|\h{\mathbf{w}}_c - \mathbf{v}_c \|.
\end{align*}
Noting from \eqref{eqn:WAP1} that $\|\mathbf{v}_f - W\h{\mathbf{w}}_c \|^2 +
\|\mathbf{v}_c - \h{\mathbf{w}}_c\|^2 \leq \frac{C_P}{\|A^*A\|}\| A \mathbf{v} \|^2$, we can form a constrained maximization problem and bound
\begin{align*}
\| \mathbf{v}_f - W \mathbf{v}_c\| & \leq \sqrt{C_P(1+\|W\|^2)}\frac{\|A\mathbf{v}\|}{\|A\|}
	 := C_W \frac{\|A\mathbf{v}\|}{\|A\|},
\end{align*}
where $C_W := \sqrt{C_P(1+\|W\|^2)}$. In particular, let $\mathbf{v}_f = -A_{ff}^{-1}A_{fc} \mathbf{v}_c$. Then,
\begin{align}\label{eqn:WAPKA}
\|A_{ff}^{-1}\delta_P\mathbf{v}_c\| = \| (W + A_{ff}^{-1} A_{fc})\mathbf{v}_c\| \leq C_W\frac{ \| A \mathbf{v}\|}{\|A\|}
= C_W\frac{\| \mathcal{K}_A \mathbf{v}_c\|}{\|A\|}.
\end{align}
Following from \eqref{eqn:K-K_A} and \eqref{eqn:WAPKA}, observe that
\begin{align*}
\| \mathcal{K} \mathbf{v}_c \| & \geq \|\mathcal{K}_A \mathbf{v}_c\| - \| (\mathcal{K}_A - \mathcal{K}) \mathbf{v}_c\| \\
& = \|\mathcal{K}_A\mathbf{v}_c\| - \|\delta_RA_{ff}^{-1}\delta_P\mathbf{v}_c\| \\
& \geq (1-C_W\frac{\|\delta_R\|}{\|A\|}) \| \mathcal{K}_A \mathbf{v}_c\|.
\end{align*}
Note that, given $W$ with WAP constant $C_W$, we may choose $Z$ so that $C_W\|\delta_R\| < \|A\|$.
Later, $C_W\|\delta_R\|$ will be chosen slightly smaller. Combining, we arrive at
\begin{align*}
\|A_{ff}^{-1}\delta_P \mathcal{K}^{-1}\| =  \sup_{\mathbf{v}_c\neq\mathbf{0}} \frac{\| A_{ff}^{-1}\delta_P \mathbf{v}_c\| }{\|\mathcal{K} \mathbf{v}_c\|} \leq\frac{C_W}{\|A\|-C_W\|\delta_R\|}.
\end{align*}

This result is generalized in the following Lemma. 

\begin{lemma}\label{lem:waps}
Suppose $P$ satisfies the WAP with respect to $A^*A$, with constant $C_P$. Then, if $Z$ is chosen such that $C_W\|\delta_R\| < \|A\|$,
\begin{align*}
\|A_{ff}^{-1}\delta_P \mathcal{K}^{-1}\| & \leq \frac{C_W}{\|A\|-C_W\|\delta_R\|}, \hspace{5ex}
\|\delta_P \mathcal{K}^{-1}\| \leq \frac{C_W\|A_{ff}\|}{\|A\|-C_W\|\delta_R\|},
\end{align*}
where $C_W = C_P\sqrt{1+\|W\|^2}$. 
{\color{black}
Similarly, suppose $R^*$ satisfies the WAP with respect to $AA^*$, with
constant $C_R$. Define $C_Z = \sqrt{C_R(1+\|Z\|^2)}$ and assume that $C_Z\|\delta_P\| < \|A\|$. 
Then,
\begin{align*}
\|\mathcal{K}^{-1}\delta_RA_{ff}^{-1}\| & \leq \frac{C_Z}{\|A\|-C_Z\|\delta_P\|}, \hspace{5ex}
\|\mathcal{K}^{-1}\delta_R\| \leq \frac{C_Z\|A_{ff}\|}{\|A\|-C_Z\|\delta_P\|}.
\end{align*}}
\end{lemma}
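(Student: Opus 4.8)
The plan is to get the two bounds for $P$ (and $A^*A$) essentially for free from the computation already carried out just before the lemma, and then derive the two bounds for $R^*$ (and $AA^*$) by an adjoint/duality argument. For the first bound, $\|A_{ff}^{-1}\delta_P\mathcal{K}^{-1}\| \le C_W/(1 - C_W\|\delta_R\|)$, I would simply record that this is exactly what the displayed calculation preceding the lemma establishes: apply the WAP to the vector $\mathbf{v}$ with $\mathbf{v}_f = -A_{ff}^{-1}A_{fc}\mathbf{v}_c$, use the triangle inequality and a constrained maximization to get $\|A_{ff}^{-1}\delta_P\mathbf{v}_c\| \le C_W\|\mathcal{K}_A\mathbf{v}_c\|$, bound $\|\mathcal{K}\mathbf{v}_c\| \ge (1-C_W\|\delta_R\|)\|\mathcal{K}_A\mathbf{v}_c\|$ using \eqref{eqn:K-K_A}, and take the supremum over $\mathbf{v}_c$. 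The second bound then follows in one line: $\delta_P\mathcal{K}^{-1} = A_{ff}(A_{ff}^{-1}\delta_P\mathcal{K}^{-1})$, so submultiplicativity of the spectral norm gives $\|\delta_P\mathcal{K}^{-1}\| \le \|A_{ff}\|\,\|A_{ff}^{-1}\delta_P\mathcal{K}^{-1}\|$.

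For the $R^*$ bounds I would apply the already-proven $P$-bounds to the transposed problem $\widetilde{A} := A^*$. The crucial observation is that taking the conjugate transpose commutes with the CF block partitioning, so $(A^*)_{ff} = A_{ff}^*$ (which is again full rank), $(A^*)_{fc} = (A_{cf})^*$, $(A^*)_{cf} = (A_{fc})^*$, and $(A^*)_{cc} = A_{cc}^*$. Consequently $R^*$ has exactly the block form of an interpolation operator $\widetilde{P}$ for $\widetilde{A}$ whose F-block is $\widetilde{W} = Z^*$, and $\widetilde{A}^*\widetilde{A} = AA^*$; thus the hypothesis that $R^*$ satisfies the WAP with respect to $AA^*$ with constant $C_R$ is precisely the statement that $\widetilde{P}$ satisfies the WAP with respect to $\widetilde{A}^*\widetilde{A}$. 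A short check from \eqref{eq:block}, \eqref{eq:rap} and the definitions of $\delta_P,\delta_R$ then shows $\widetilde{\delta}_P = \delta_R^*$, $\widetilde{\delta}_R = \delta_P^*$, and that the coarse-grid operator transforms as $\widetilde{\mathcal{K}} := R^*\widetilde{A}\widetilde{P} = \mathcal{K}^*$. Hence $\widetilde{A}_{ff}^{-1}\widetilde{\delta}_P\widetilde{\mathcal{K}}^{-1} = (A_{ff}^{-1})^*\delta_R^*(\mathcal{K}^{-1})^* = (\mathcal{K}^{-1}\delta_R A_{ff}^{-1})^*$, so applying the first bound of the lemma to $\widetilde{A}$ and using $\|B^*\| = \|B\|$ and $\|\delta_P^*\| = \|\delta_P\|$ gives $\|\mathcal{K}^{-1}\delta_R A_{ff}^{-1}\| \le C_Z/(1 - C_Z\|\delta_P\|)$ with $C_Z = C_R\sqrt{1+\|Z\|^2}$; the hypothesis $C_Z\|\delta_P\| < 1$ is what makes the right-hand side finite and positive. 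The final bound again follows by inserting $A_{ff}^{-1}A_{ff}$ and using $\|A_{ff}^*\| = \|A_{ff}\|$.

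The whole argument is just triangle inequality, submultiplicativity, and invariance of the spectral norm under conjugate transpose; there is no deep step. The one place that requires genuine care — and hence the main obstacle — is the block-transposition bookkeeping in the duality step: getting the off-diagonal blocks of $A^*$ into the right position ($(A^*)_{fc} = (A_{cf})^*$, not $(A_{fc})^*$), confirming that the Schur complement transforms as $\mathcal{K}_{A^*} = \mathcal{K}_A^*$, and verifying that the interpolation $\widetilde{P}$ built from $A^*$ with F-block $Z^*$ really does reproduce $\mathcal{K}^*$ as its coarse-grid operator. If one prefers to avoid invoking duality at all, an equivalent, fully self-contained route is simply to repeat the pre-lemma computation verbatim under the substitutions $A\to A^*$, $P\to R^*$, $\mathcal{K}\to\mathcal{K}^*$, $\mathcal{K}_A\to\mathcal{K}_A^*$, which yields the $R^*$ bounds directly. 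I would also reconcile the two slightly different forms of $C_W$ (and correspondingly $C_Z$) appearing in the lemma statement versus the preceding derivation — i.e.\ whether the $1/\|A^*A\|$ factor from the WAP definition is folded into the constant — but that is cosmetic and does not affect the argument.
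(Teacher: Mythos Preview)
Your proposal is correct and matches the paper's own proof, which is equally terse: it simply says the first two bounds follow from the discussion preceding the lemma together with the submultiplicativity step $\|\delta_P\mathcal{K}^{-1}\| \le \|A_{ff}\|\,\|A_{ff}^{-1}\delta_P\mathcal{K}^{-1}\|$, and that the $R$-bounds follow by ``a similar derivation as that for $P$.'' Your duality argument (apply the $P$-result to $\widetilde{A}=A^*$ with $\widetilde{P}=R^*$, $\widetilde{R}=P^*$) is a clean way to make that ``similar derivation'' precise---modulo the small typo $\widetilde{\mathcal{K}}:=R^*\widetilde{A}\widetilde{P}$, which should read $P^*\widetilde{A}R^*$---and your observation about the two inconsistent normalizations of $C_W$ (and $C_Z$) in the paper is well taken.
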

\begin{proof}
The results follow from the above discussion and noting that $\|\delta_P \mathcal{K}^{-1}\| \leq \|A_{ff}\|\|A_{ff}^{-1}\delta_P \mathcal{K}^{-1}\|$.
Results for $R$ follow a similar derivation as that for $P$.
\end{proof}

The above discussion is summarized in the following results.

\begin{theorem} \label{thm:WAPP} 
Let $A$, $P$, and $R$ be defined as above. If $W$ is chosen so that $P$ satisfies the WAP with respect to $A^*A$, with
constant $C_P$, then $Z$ can be chosen to approximate $-A_{cf}A_{ff}^{-1}$ and $\Delta_F$ can be chosen
to approximate $A_{ff}^{-1}$ so that
\begin{align*}
\|G\| & \leq \| \delta_F \| + \Big(1+\|\delta_F\|\Big) \frac{C_W\|\delta_R\|}{\|A\|-C_W\|\delta_R\|} = \rho < 1, 
\end{align*}
where $C_W = C_P\sqrt{1+\|W\|^2}$.
\end{theorem}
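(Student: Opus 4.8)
The plan is to bound $\|G\|$ directly from the closed form \eqref{eqn:G2}, $G = \delta_F + \Delta_F\delta_P\mathcal{K}^{-1}\delta_R$, with the weak approximation property entering only through Lemma \ref{lem:waps}. First I would use $\delta_F = I-\Delta_FA_{ff}$ to write $\Delta_F = (I-\delta_F)A_{ff}^{-1}$, so that, by the triangle inequality and submultiplicativity of the operator norm,
\[
\|G\| \;\le\; \|\delta_F\| + (1+\|\delta_F\|)\,\|A_{ff}^{-1}\delta_P\mathcal{K}^{-1}\|\,\|\delta_R\|.
\]
The middle factor is precisely the quantity controlled by the WAP: assuming $W$ makes $P$ satisfy the WAP with respect to $A^*A$ with constant $C_P$, and provided $Z$ is chosen so that $C_W\|\delta_R\|<1$ with $C_W = C_P\sqrt{1+\|W\|^2}$, Lemma \ref{lem:waps} gives $\|A_{ff}^{-1}\delta_P\mathcal{K}^{-1}\| \le C_W/(1-C_W\|\delta_R\|)$. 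Substituting produces the stated estimate $\|G\| \le \|\delta_F\| + (1+\|\delta_F\|)\frac{C_W\|\delta_R\|}{1-C_W\|\delta_R\|}$.

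It then remains to show this right-hand side can be made strictly less than one. The essential point is that $C_W$ depends only on $W$: it is fixed once $P$ is fixed and, crucially, is independent of $\Delta_F$, $Z$, and $h$. Hence I would (i) choose $Z$ approximating $-A_{cf}A_{ff}^{-1}$ accurately enough that $C_W\|\delta_R\|$ is small --- the exact operator $Z=-A_{cf}A_{ff}^{-1}$ gives $\delta_R=0$ --- and (ii) choose $\Delta_F$ approximating $A_{ff}^{-1}$ accurately enough that $\|\delta_F\|$ is small, with $\Delta_F = A_{ff}^{-1}$ giving $\delta_F=0$. For instance, imposing $\|\delta_F\|\le\tfrac14$ and then choosing $Z$ so that $\tfrac54\,\frac{C_W\|\delta_R\|}{1-C_W\|\delta_R\|}\le\tfrac14$ forces $\|G\|\le\tfrac12<1$; any admissible pair of tolerances witnesses the claim, and such pairs exist because $\delta_F=\delta_R=0$ is attainable in the limit.

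The bound on $\|G_{pre}\|$ is proved the same way, starting from the pre-F-relaxation identity $G_{pre} = \hat{\delta}_F + \delta_P\mathcal{K}^{-1}\delta_R\Delta_F$, estimating $\|G_{pre}\| \le \|\hat{\delta}_F\| + \|\delta_P\mathcal{K}^{-1}\|\,\|\delta_R\|\,\|\Delta_F\|$, and using the companion bound $\|\delta_P\mathcal{K}^{-1}\| \le C_W\|A_{ff}\|/(1-C_W\|\delta_R\|)$ from Lemma \ref{lem:waps}. I expect the only subtlety --- and it is mild --- to be the coefficient in front of the $\delta_R$ term: here it is $\|\Delta_F\|\|A_{ff}\|$ rather than $1+\|\delta_F\|$, and since $\Delta_F = A_{ff}^{-1}(I-\hat{\delta}_F)$ this is at most $(1+\|\hat{\delta}_F\|)\kappa(A_{ff})$. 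So in the pre-relaxation case one should fix $\Delta_F$ first --- making $\|\hat{\delta}_F\|$ small, which simultaneously pins down $\|\Delta_F\|\|A_{ff}\|$ near $\kappa(A_{ff})$ --- and only afterward pick $Z$ small enough relative to that fixed quantity. Beyond ordering the tolerances correctly (and observing that $C_W$ and $\kappa(A_{ff})$ are unaffected by improving $\Delta_F$ and $Z$), the argument is routine; the real work has already been done in Lemma \ref{lem:waps}.
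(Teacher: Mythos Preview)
Your proposal is correct and follows essentially the same route as the paper: bound $\|G\|$ from \eqref{eqn:G2} via $\Delta_F = (I-\delta_F)A_{ff}^{-1}$, invoke Lemma~\ref{lem:waps} for $\|A_{ff}^{-1}\delta_P\mathcal{K}^{-1}\|$, and do the analogous step for $G_{pre}$ using $\|\delta_P\mathcal{K}^{-1}\|$. Your explicit discussion of how to choose $Z$ and $\Delta_F$ to force $\rho<1$ is more detailed than the paper's, which simply notes the two displayed inequalities and that $\|\Delta_F\|\|A_{ff}\|\leq\kappa(A_{ff})\sim O(1)$, then defers to the preceding discussion.
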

\begin{proof}
The proof follows from Lemma \ref{lem:waps} and the bound using \eqref{eqn:G1},
\begin{align*}
\|G\| & \leq \|\delta_F\| + (1+\|\delta_F\|)\|\delta_R\|\|A_{ff}^{-1}\delta_P\mathcal{K}^{-1}\|.
\end{align*}
\end{proof}

{\color{black}
\begin{theorem} \label{thm:WAPR} 
Let $A$, $P$, and $R$ be defined as above. If $Z$ is chosen so that $R$ satisfies the WAP with respect to $AA^*$
and constant $C_R$, then $W$ can be chosen to approximate $-A_{ff}^{-1}A_{fc}$ and $\Delta_F$ can be chosen
to approximate $A_{ff}^{-1}$ such that
\begin{align*}
\|G\| & \leq \| \delta_F \| + (1+\|\delta_F\|) \frac{C_Z\|\delta_P\|}{\|A\|-C_Z\|\delta_P\|} = \rho < 1, 
\end{align*}
where $C_Z = C_R\sqrt{1+\|Z\|^2}$.
\end{theorem}
\begin{proof}
The proof follows from the discussion above and the proof of Theorem \ref{thm:WAPP}.
\end{proof}
}

Theorems \ref{thm:WAPP} \tcb{and \ref{thm:WAPR}}  give insight into the roles of restriction, interpolation, and F-relaxation.
F-relaxation can help convergence bounds, but only to a certain extent. For an exact solve on F-points, $\|\delta_F\| = 0$. Then, 
for example, in Theorem \ref{thm:WAPP}, to ensure that $\|G\| \leq \rho < 1$, we must have
$C_W\|\delta_R\|/\|A\| \leq \frac{\rho}{\rho+1} < \frac{1}{2}$. This can be accomplished both through a more accurate interpolation
with respect to the WAP or a more accurate approximation to ideal restriction. As $\|\delta_F\|$ increases, that is, F-relaxation
becomes less effective, interpolation and restriction must improve through reduced $C_W$ and/or $\|\delta_R\|$.

The $\ell^2$-convergence of error and residual also follows from Theorems \ref{thm:WAPP} and \ref{thm:WAPR} and
Lemma \ref{lem:conv}. Although it is possible that $\|\mathcal{E}\|,\|\mathcal{R}\| > 1$, if the hypothesis are satisfied,
there exists $k_1$ and $k_2$ such
that $\|\mathcal{E}^{k_1}\|, \|\mathcal{R}^{k_2}\| < 1$. Iterations before these values of $k$ are reached may appear to be
diverging, but they will eventually converge with asymptotic factor $\rho$. How long it takes to reach asymptotic convergence
depends on the other matrix blocks in $\mathcal{E},\mathcal{R}$. This has been observed in practice.

Consider $\| \mathcal{E}\|$ and $\|\mathcal{R}\|$ with respect to the size of the mesh. From (\ref{eqn:E_E2}), it is clear
that choosing $\mathbf{e} = (\mathbf{0},\mathbf{e}_c^T)^T$ yields $\|\mathcal{E} \| \geq \| \mathcal{K}^{-1}\delta_R\|$. In the
case that $A$ is a discrete approximation of a PDE, $\|\mathcal{K}^{-1}\|$ may grow with $n$, the size of the system, whereas,
$\|\delta_R\|$ may be fixed. Although $\|G\|=\rho$ is independent of $n$, without additional approximation properties
on $Z$, the norm of $\mathcal{E}$ may not be bounded independent of $n$. This would lead to a method for which it
takes more iterations to reach a given accuracy as the problem size increases. Building on Lemma
\ref{lem:waps}, conditions for residual and error propagation bounded independent of $n$ are given in the following
corollary.

\begin{corollary}[Bounded Residual and Error]\label{lem:boundedRandE}
Assume that $P$ satisfies the WAP with respect to $A^*A$, with constant $C_P$, Theorem \ref{thm:WAPP} holds, and
the condition numbers of $A_{ff}$ and $\Delta_F$ are independent of problem size. Then, for $k\geq 0$,
$\|\mathcal{R}^k\|$ is bounded, independent of problem size, and converges
with asymptotic rate $\leq \rho$. 

\tcb{
If, in addition, $R^*$ satisfies a WAP with respect to $AA^*$, then for $k\geq 0$, 
$\|\mathcal{E}^k\|$ is bounded, independent of problem size and converges with 
asymptotic rate $\leq\rho$.}
\end{corollary}
\begin{proof} 
\tcb{Consider the terms in \eqref{eqn:RTk}. Under the assumption that $P$ satisfies a WAP with respect to 
$A^*A$, it was shown in Lemma \ref{lem:waps} that $\|\delta_P\mathcal{K}^{-1}\|$ is bounded independent of $n$. 
All other terms in the equation are bounded independent of $n$.
Likewise, consider the terms in \eqref{eqn:ETk}. From Lemma \ref{lem:waps}, if $R^*$ satisfies a WAP with respect to $AA^*$,
then $\|\mathcal{K}^{-1}\delta_R\|$ is bounded independent of $n$.}
\end{proof}

\begin{remark}[C-point relaxation]\label{rem:cpt}
In a two-level setting, adding relaxation over C-points as part of the pre- or post-relaxation scheme offers little to no
improvement of convergence. Suppose $Z = -A_{cf}\Delta_R$, where $\Delta_F = \Delta_R$, that is, the same
approximation is used for F-relaxation as for approximating $R_{ideal}$. Now, consider following the F-point relaxation
with a C-point relaxation. Similar to F-point relaxation, the error-propagation operator associated with
C-point relaxation is given by
\begin{align*}
\mathcal{E}_C = \left[\begin{array}{cc} I & 0 \\ 0 & I \end{array}\right] -
	\left[\begin{array}{cc} 0 & 0 \\ 0 & \Delta_C\end{array}\right]
	\left[ \begin{array}{cc} A_{ff} & A_{fc} \\ A_{cf} & A_{cc} \end{array}\right],
\end{align*}
where $\Delta_C$ is an approximation to $A_{cc}^{-1}$. Noting that the C-point rows of $(M-A)$ are zero when
\tcb{$Z = -A_{cf}\Delta_F$ (see \eqref{eq:M_A})}, multiplying by $\mathcal{E}$ yields
\begin{align*}
\mathcal{E}_C \mathcal{E} &= I - \left(M^{-1} + \left[\begin{array}{cc} 0 & 0 \\ 0 & \Delta_C\end{array}\right](M-A)M^{-1})\right) A \\
&= I - M^{-1} A = \mathcal{E}_E.
\end{align*}
This demonstrates that, in the context of a two-grid method, \tcb{with reasonable choice of $Z$}, C-point relaxation does not improve the 
solution. In the multilevel setting, C-point relaxation can offer some improvement in convergence, but remains much less important than 
F-relaxation.
\end{remark}

\section{The triangular case}\label{sec:tri}

Building on the previous section, this section develops an accurate approximation to $R_{\textnormal{ideal}}$
for matrices with block-triangular or near-triangular structure. Numerical examples
demonstrate the accuracy of nAIR in the context of convergence constants derived in Section \ref{sec:analysis}.
Although reduction can be
achieved with ideal restriction or interpolation, focusing on ideal restriction allows for coupling the $n$AIR method
with established interpolation methods.
One result here is that error and residual propagation of $n$AIR are nilpotent in the case of block-triangular matrices,
which compensates for inaccurate interpolation near boundaries.

{\color{black}
As motivation, consider a block-discontinuous discretization of a steady state advection or advection-reaction 
equation in two dimensions,
\begin{align*}
\mathbf{b}\cdot \nabla u + c(x,y) u & = f,
\end{align*}
for arbitrary velocity field $\mathbf{b}(x,y)$ (without cycles, or else the problem is not well posed), forcing
function $\mathbf{f}$, reaction field $c(x,y)$, and some inflow and outflow boundary conditions (for example,
see Section \ref{sec:results}).
Suppose a uniform square mesh is used in two dimensions. Then, for many discretizations, such as
discontinuous Galerkin, among others, DOFs in each element of the mesh
depend on exactly two other elements in the mesh, specifically the two elements upwind with respect to the
direction of flow, and each element in the mesh corresponds to a non-overlapping block in the matrix. 
In the multigrid context, consider a block red-black coarsening scheme, where C-points and
F-points represent entire finite elements, as shown in Figure \ref{fig:rb}.
\begin{figure}[h!]
\centering
\includegraphics[width=2.5in]{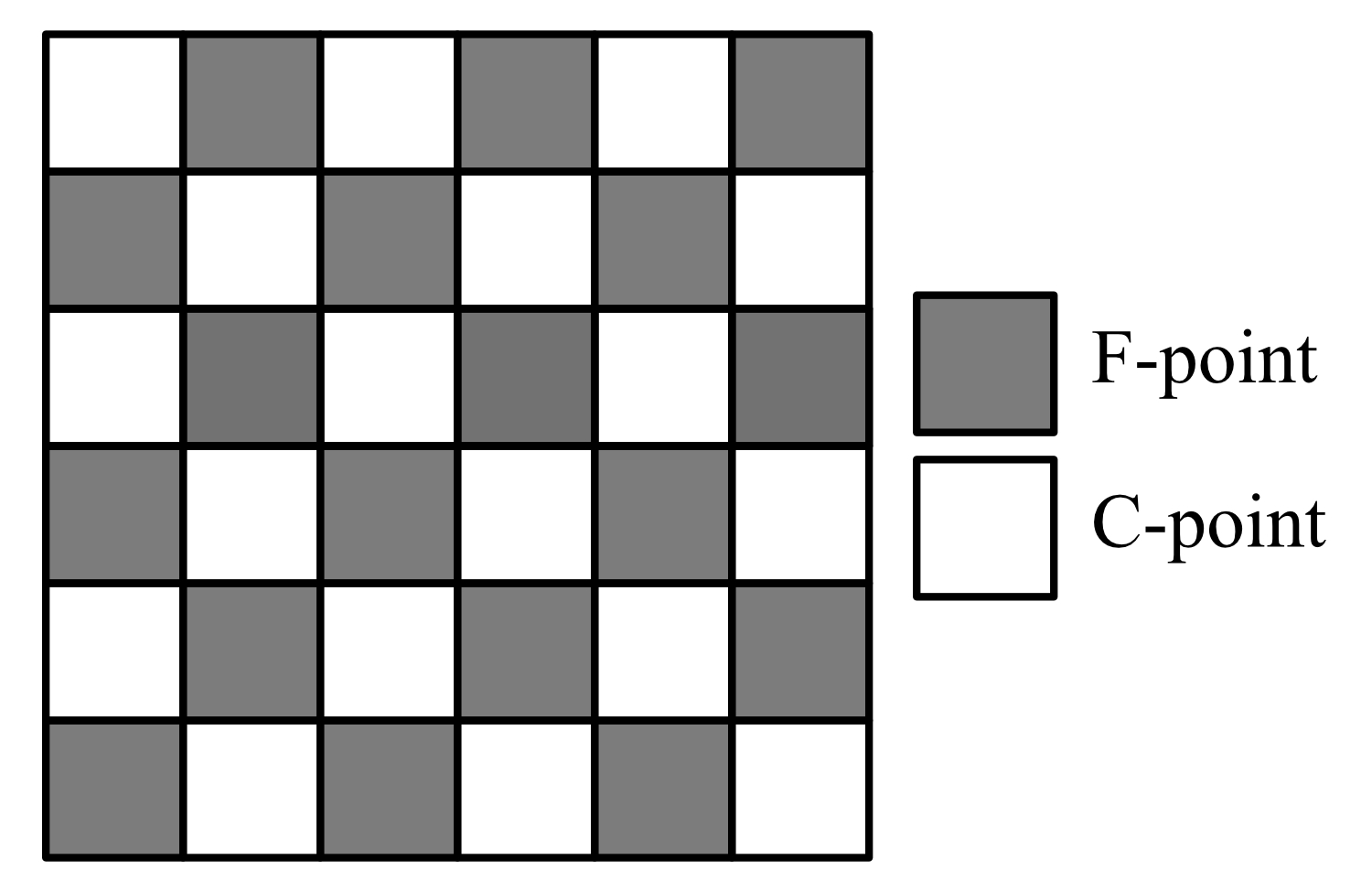}
\caption{Element-wise ``block'' red-black (or white-gray) coarsening of uniform structured grid in two-dimensions.}
\label{fig:rb}
\end{figure}

For linear finite elements, the DOFs corresponding to a block discretization are often located at the vertices of the
mesh, with one DOF contained in each element touching a given vertex. For higher-order discretizations, there are
more DOFs, but the underlying principle remains the same: all DOFs in a given finite element only depend on
DOFs within that element, or DOFs in either of the two upwind elements. Looking at Figure \ref{fig:rb}, note that for
any velocity field $\mathbf{b}(x,y)$, all F-point blocks depend only on C-point blocks, and C-point blocks depend
only on F-point blocks. If $A$ is scaled to have unit diagonal, then it follows that the submatrix $A_{ff} = I$, 
and thus, ideal restriction and exact F-relaxation
are trivial to compute in practice.

Of course, this example holds for a block diffusion discretization as well (that is, $A_{ff} = I$ there as well). However,
for advection, the coarse-grid operator $RAP = A_{cc} - A_{cf}A_{ff}^{-1}A_{fc} = I - A_{cf}A_{fc}$ maintains a 
similar structure to the fine grid. Note that for any C-point, the corresponding row of $RAP=I - A_{cf}A_{fc}$ is nonzero in
C-point blocks that can be reached through a C-F-C path in the graph. Generally for an advection problem, (i) each C-point
has about three coarse-grid connections, (ii) all connections are upwind (and, thus, strictly lower triangular in the
matrix), and (iii) at least one of these connections is essentially cross-stream, making it a weak connection. In this
case, coarsening similar to Figure \ref{fig:rb} based on strong connections leads to an $A_{ff}\neq I$, but for
which $A_{ff}^{-1}$ is well conditioned and easily approximated with a sparse matrix. In contrast, each coarse-grid point in a diffusion
discretization is connected to eight other points, making $A_{ff}^{-1}$ and $R_{ideal}$ difficult to approximate effectively
in a sparse manner, and multilevel reduction much more difficult.

On unstructured meshes, non-quadrilateral elements, higher dimensions, or coarser grids in an AMG hierarchy,
it is typically not the case that $A_{ff}=I$. Nevertheless, similar principles suggest that $A_{ff}^{-1}$
can be approximated efficiently, which is confirmed directly in Section \ref{sec:tri:const} and implicitly in numerical
results of Section \ref{sec:results}.
}

\subsection{Neumann approximate ideal restriction}\label{sec:tri:air}

For general matrices, a naive and often ineffective approach to approximate $A_{ff}^{-1}$ is to use a truncated
Neumann expansion. However, in the case of block-triangular matrices, particularly those resulting from the discretization
of differential operators, a truncated Neumann inverse expansion can provide a remarkably accurate approximation.
For ease of notation, assume that $A$ has been scaled to have unit diagonal,
and suppose we have determined a
CF-splitting (or block CF-splitting if $A$ is block lower triangular). Then, let $A_{ff} =
I - L_{ff}$, where $L_{ff}$ is the strictly lower triangular part of $A_{ff}$. Because $L_{ff}$
is nilpotent, $A_{ff}^{-1}$ can be written as a finite Neumann expansion:\footnote{Because there are no cycles in the graph
of $L_{ff}$, the degree of nilpotency is given by the maximum graph distance between any two F-points, say $d_f$. In the case
of an acyclic graph, this is equivalent to the \textit{longest path problem}. For discretizations of differential operators that result
in an acyclic graph, it is generally the case that $d_f \ll n_f$.}
\begin{align}\label{eq:neumann}
A_{ff}^{-1} = (I - L_{ff})^{-1} = \sum_{i=0}^{d_f+1} L_{ff}^i.
\end{align}
To approximate $A_{ff}^{-1}$, we consider an order-$k$ approximation given by truncating \eqref{eq:neumann}:
$\Delta^{(k)} := \sum_{i=0}^k L_{ff}^i$, for some $0\leq k\leq d_f$. Define a restriction operator
based on a Neumann approximate ideal restriction ($n$AIR):
\begin{align*}
R := \begin{bmatrix}-A_{cf}\Delta^{(k)} & I\end{bmatrix}.
\end{align*}
Error in $\Delta^{(k)}$ as an approximate inverse can be measured as $I - \Delta^{(k)}A_{ff} = L_{ff}^{k+1}$, which
gives a measure of how accurately we approximate $R_{\textnormal{ideal}}$.

Note that the error relation $I - \Delta^{(k)}A_{ff} = L_{ff}^{k+1}$ does not require $A_{ff}$ to be lower triangular. However,
triangular structure is fundamental to $L_{ff}^k$ being small as $k$ increases, particularly when considering the discretization of
differential operators. Consider $L_{ff}$ as the adjacency
matrix of a directed acyclic graph. Then $(L_{ff}^k)_{ij}$ gives the sum of weighted walks of length $k$ from vertex $i$ to vertex $j$ (weight
given by \textit{product} of the walk's edges). Thus, we are interested in the number of F-F connections and size of the weights. For the
discretization of differential operators, off-diagonal elements are typically small relative to the diagonal, and an AMG CF-splitting is chosen to eliminate strong
F-F connections. In the case of triangular matrices, such as an upwind discretization of advection, regardless of the problem dimension, there
only exist walks from node $i$ to nodes $j$ \textit{downstream} of $i$, in the direction along the characteristic. This means that the sparsity pattern of
$L_{ff}^k$ only reaches out to neighbors in effectively one direction. Thus, as $k$ increases, the number of neighbors within distance $k$
should not increase significantly, while the product of edges should decay rapidly. 

\begin{figure}[!b]
  \centering
  \begin{subfigure}[b]{0.495\textwidth}
    \includegraphics[width=\textwidth]{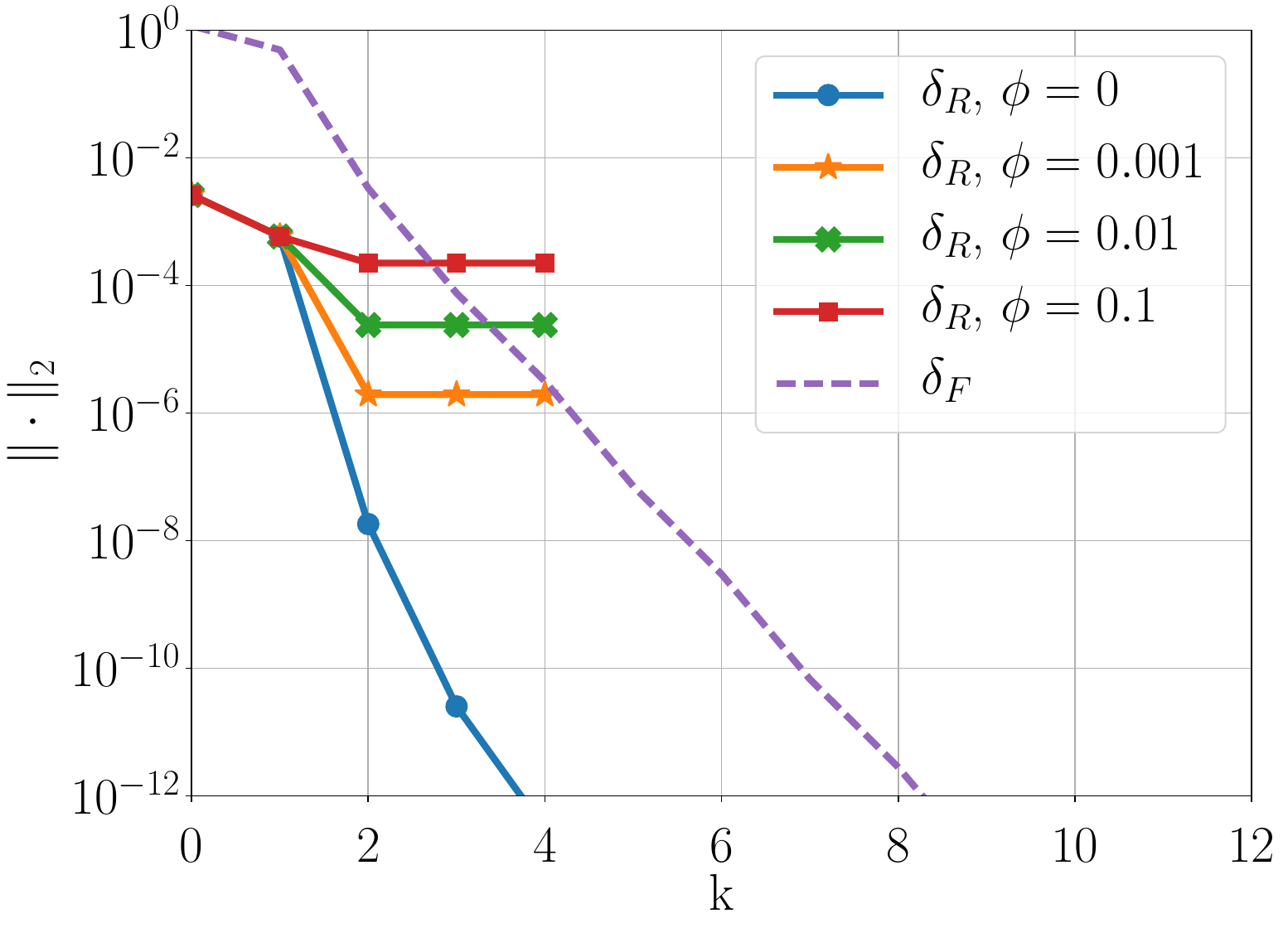}
    \caption{1st-order, $\kappa= 0$}
  \end{subfigure}
   \begin{subfigure}[b]{0.495\textwidth}
    \includegraphics[width=\textwidth]{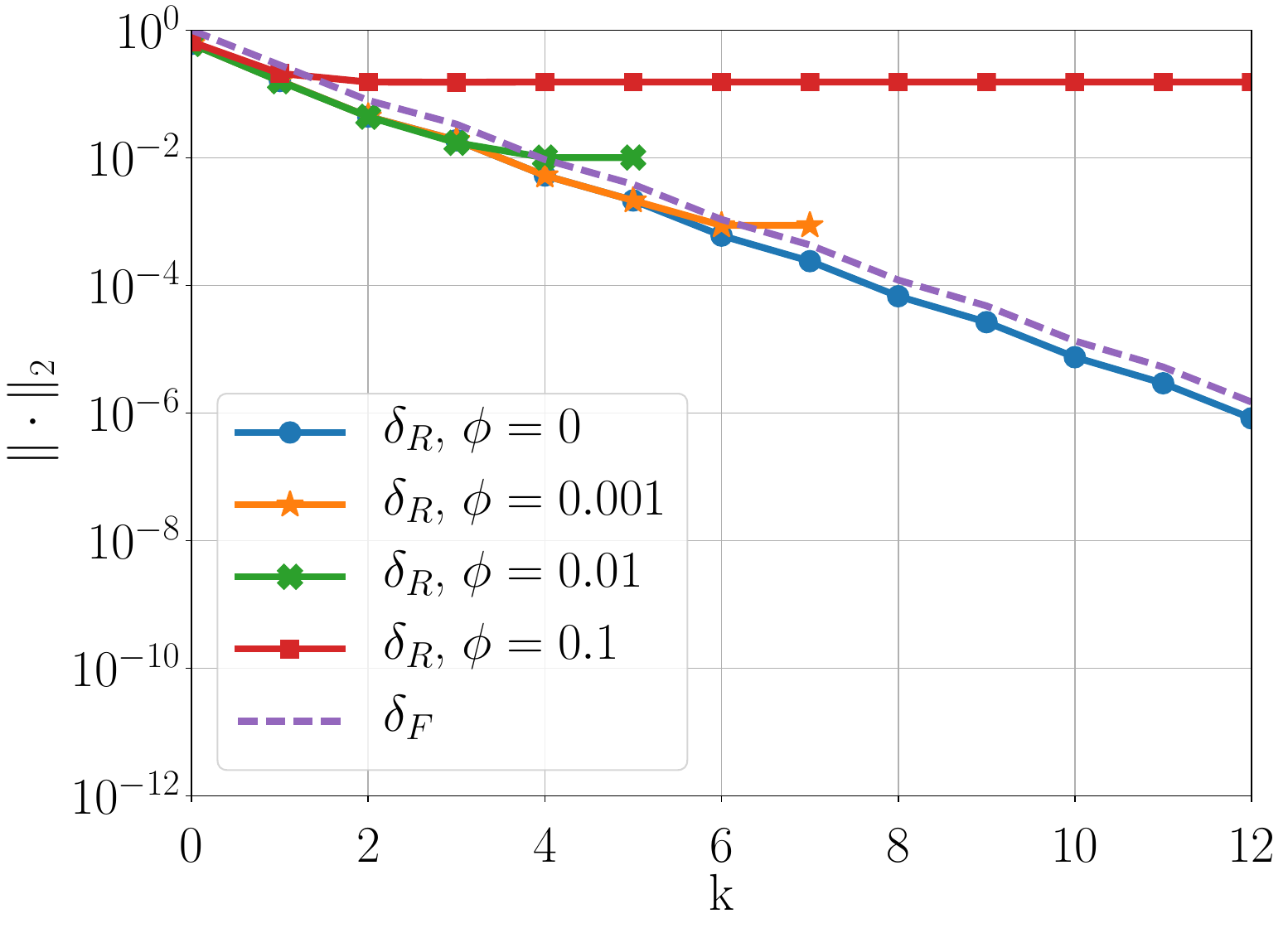}
    \caption{1st-order, $\kappa= 1$}
  \end{subfigure}
  \\\vspace{2ex}
  \begin{subfigure}[b]{0.495\textwidth}
    \includegraphics[width=\textwidth]{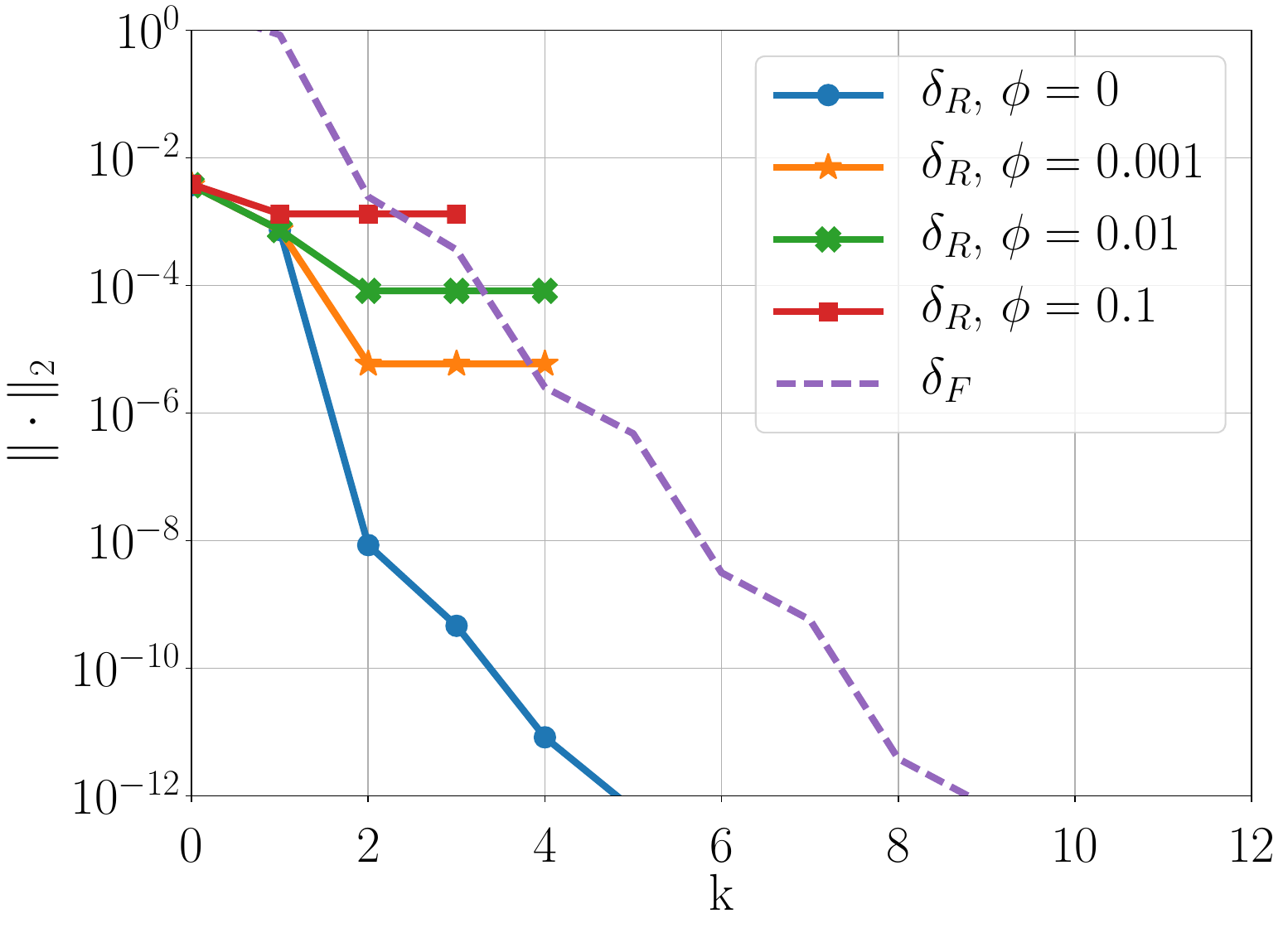}
    \caption{4th-order, $\kappa= 0$}
  \end{subfigure}
   \begin{subfigure}[b]{0.495\textwidth}
    \includegraphics[width=\textwidth]{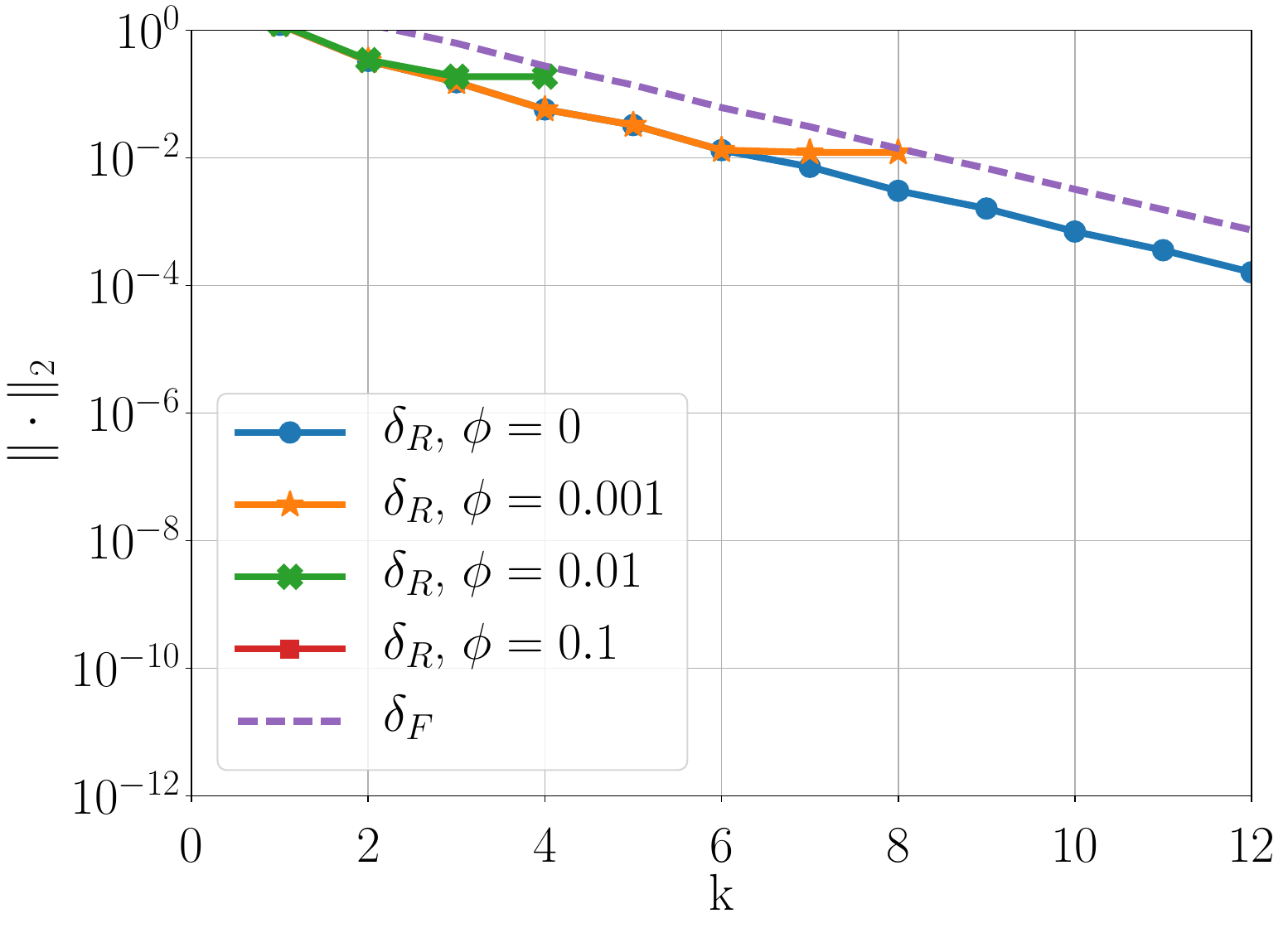}
    \caption{4th-order, $\kappa= 1$}
  \end{subfigure}  
      \caption{Using upwind DG, $\|\delta_R\|$ and $\|\delta_F\|$ are displayed as a function of $k$, corresponding to degree of 
      Neumann expansion for $n$AIR
      and number of (block) smoothing iterations for Jacobi F-relaxation. The top row corresponds to linear finite elements and the bottom
      to 4th-order finite elements, while the left column corresponds to steady state transport (no diffusion, $\kappa=0$) and the right columns
      corresponds to a diffusion-dominated advection-diffusion-reaction equation, $\kappa = 1$. $\delta_R$ is a function of parameter $\phi < 1$,
      where a Neumann expansion is only performed on strong connections (for row $i$, $a_{ij}$ such that $|a_{ij}| \geq \phi \sup_j |a_{ij}|$).     
       In all cases, the discretization has $\approx 400,000$ DOFs, about half of which are F-points.  }
  \label{fig:neumann}
\end{figure}

\begin{remark}[Nilpotent Error Propagation]
{\color{black}
It is straight forward to show that if $A$ is lower triangular in some ordering 
and $\Delta_R$, $\Delta_P$, and $\Delta_F$ are truncated Neumann approximations to $A_{ff}^{-1}$,
then two-grid error propagation based on Jacobi F-relaxation is strictly lower triangular.
Moreover,  multilevel error propagation, with coarse-grid correction based on $n$AIR and 
Jacobi F-relaxation, is also strictly lower triangular and, thus, nilpotent. 

Although, the degree of nilpotency is not sufficiently small to be considered practical, nilpotency of error 
propagation  is directly impactful near the boundaries of the domain,  where DOFs actually fall off the nilpotent cliff 
in $O(1)$ iterations. AMG often struggles with interpolation near domain boundaries, and the nilpotent behavior of
$n$AIR eliminates this problem. This benefit has been observed in practice.

}
\end{remark}

{\color{black}
\subsection{Evaluating constants}\label{sec:tri:const}

This section considers nAIR applied to discontinuous Galerkin discretizations of the steady state transport equation
and an advection-diffusion-reaction equation, with diffusion coefficient $\kappa$. Both discretizations are on unstructured
meshes with a moving (non-constant) velocity field and material discontinuities of $10^8$ between different subdomains.
Further details on the transport
discretization can be found in Section \ref{sec:results}. The advection-diffusion-reaction equation follows an analogous
derivation, and is discussed in \cite{air2}.
Recall, when approximating ideal restriction (as opposed to ideal interpolation), the theoretical constants of interest are
\begin{align*} 
\delta_F &= I-\Delta_FA_{ff},\hspace{5ex} 
\delta_R = ZA_{ff}+A_{cf},
\end{align*}
as well as the constant for which $P$ satisfies a WAP (see Theorem \ref{thm:WAPP}). The purpose of this section
is to demonstrate that, on an interesting and difficult model problem, $n$AIR leads to \tcb{$\|\delta_F\| \ll 1, \|\delta_R\| \ll \|A\|$,
where $A$ has been scaled so that $\|A\|\sim O(1)$.)}
Following from Theorem \ref{thm:WAPP}, we then show that strong two-grid convergence is provable under
minimal assumptions on $P$ for the transport equation. Results also show why the reduction approach is
successful for advection discretizations, but less effective for diffusive problems.

Figure \ref{fig:neumann} plots $\|\delta_F\|$ and $\|\delta_R\|$ as a function of number of iterations and degree of Neumann
expansion, respectively. In practice, it is usually not a good idea to form transfer operators using neighbors further than
distance two, unless coarsening aggressively, due to the rapid increase in the number of nonzeros in coarse-grid operators.
Similarly, transfer operators are typically only formed based on strong connections in the matrix, again to limit coarse-grid
fill in, particularly in the multilevel setting. Here, strong connections for each row are defined as entries larger
than $\phi$ times the largest row element (positive
or negative). 

Looking at the left column of Figure \ref{fig:neumann}, we see that for both 1st- and 4th-order discretizations of
the steady state transport problem, $n$AIR based on distance-one or -two strong neighbors is able to achieve
$\|\delta_R\|$ between $10^{-5}-10^{-3}$. Furthermore, three iterations of F-relaxation leads to $\|\delta_F\| \approx 10^{-4}$,
and four iterations improves this to $\|\delta_F\| \approx 10^{-6}$. In the diffusion-dominated case, $\kappa = 1$ (right column),
results are not as good. For linear elements, distance-two $n$AIR achieves at best $\|\delta_R\| \approx 0.1$ and four iterations
of F-relaxation only yields $\|\delta_F\| \approx 0.01$, while the 4th-order discretization is worse.  

Table \ref{tab:dr} plots two-grid convergence bounds from Theorem \ref{thm:WAPP} as a function of constants $C_W$ and
$\|\delta_R\|$, for a fixed $\|\delta_F\|$. Note that due to small $\|\delta_R\|$ and $\|\delta_F\|$, only mild assumptions must
be made on interpolation for rapid two-grid convergence. Although constants are likely to increase somewhat in the multilevel setting,
the initial discussion in Section \ref{sec:tri} suggests that good approximations to $A_{ff}^{-1}$ and $R_{\textnormal{ideal}}$
can still be obtained on coarser levels in the hierarchy. 

{
\begin{table}[h!]
\begin{center}
\renewcommand{\tabcolsep}{6pt}
\begin{tabular}{|c | c c c c c c c c |}\toprule
$\|\delta_R\|$ / $C_W$ & 1 & 5 & 10 & 25 & 50 & 100 & 250 & 500 \\\midrule
$10^{-5}$ & 0.0001 & 0.0002 & 0.0002 & 0.0004 & 0.0006 & 0.001 & 0.002 & 0.005 \\
$10^{-4}$ & 0.0002 & 0.0006 & 0.0011 & 0.0026 & 0.0051 & 0.010 & 0.026 & 0.053 \\
$10^{-3}$ & 0.0011 & 0.0051 & 0.0102 & 0.0257 & 0.0527 & 0.111 & 0.334 & -- \\
0.01 & 0.0102 & 0.0527 & 0.1112 & 0.3335 & -- & -- & -- & -- \\
0.1~ &  0.1112 & -- & -- & -- & -- & -- & -- & -- \\\bottomrule
\end{tabular}
\caption{Theoretical two-grid convergence bounds as a function of $\|\delta_R\|$ and $C_W$, assuming $\|\delta_F\| = 10^{-4}$
\tcb{and $A$ has been scaled such that $\|A\| \sim O(1)$}.}
\label{tab:dr} 
\end{center}
\end{table}
}
}

{\color{black}
The constant $C_P$, where $C_W := \sqrt{C_P(1+\|W\|^2)}$, is evaluated in \cite{nonsymm} for a hyperbolic
steady state transport equation, discretized with SUPG and DG finite element discretizations (for details, see
Section \ref{sec:results} here or \cite{nonsymm}). There, modified classical AMG interpolation \cite{DeSterck:2008fc}
is shown to have constant $C_P$ of 157 and 204 for SUPG and DG, respectively. Typically $\|W\|\gtrapprox 1$.
Say $\|W\| = 2$. Then $C_W\approx 28$ and $32$. Applying the same tests to the one-point interpolation used here
yields constants $C_P$ on the order of $300$ and $360$, and $C_W\approx 39$ and 42. In practice, we find that
the sparser structure of one-point interpolation is advantageous in the multilevel setting, and we see equivalent
convergence with the two methods, despite the larger constant $C_W$ using one-point interpolation.

Interestingly, in \cite{nonsymm} an analogous algorithm to AIR for interpolation is numerically shown to
have constants $C_P\approx 10-20$ and $C_W\approx 7$ and 10. Furthermore, tests suggest the constants may
be independent of problem size, a property which may not be the case for one-point and classical
interpolation. Note that those tests did
not use an efficient algorithm to build the interpolation, and extending AIR-like ideas to interpolation for hyperbolic-type
problems is ongoing work. However, such numbers indicate that better interpolation operators (than existing methods)
can be constructed for hyperbolic-type problems, and, with such interpolation, two-grid convergence could be
obtained with fairly weak approximations of $R_{\textnormal{ideal}}$.
}

\section{Algorithm}\label{sec:alg}

The main components of $n$AIR follow that of a standard Petrov-Galerkin multigrid scheme with no pre-relaxation.
Section \ref{sec:alg:amg} introduces details on the AMG components of $n$AIR, including parameters and routines for
strength-of-connection, CF-splitting, interpolation, restriction, and relaxation. Other details, including support of block
structure in matrices and a filtering procedure to reduce complexity, are discussed in Section \ref{sec:alg:extra}.

\subsection{AMG components}\label{sec:alg:amg}

Effective F-relaxation and an accurate approximation to ideal restriction both require $A_{ff}$ to be relatively well conditioned.
This is consistent with motivation for a classical AMG strength-of-connection and CF-splitting \cite{Brandt:1985um,ruge:1987},
which are used here. Jacobi F-relaxation is used with one more iteration than the degree of the Neumann approximation of
$R_{\textnormal{ideal}}$. Restriction is built using a degree-one Neumann expansion \eqref{eq:neumann} applied to strong
connections in $A_{ff}$, with connection drop-tolerance $\phi = 0.025$. These parameters are motivated through the
comparison of $n$AIR with $R_{\textnormal{ideal}}$ in Section \ref{sec:tri:air}.

From Theorem \ref{thm:WAPP}, if we approximate $R_{\textnormal{ideal}}$, then $P$ should be built targeting approximation
properties, in a classical AMG sense. \tcb{It turns out, classical AMG interpolation formulae \cite{Brandt:1985um,ruge:1987}
do not satisfy approximation properties on hyperbolic transport discretizations \cite{nonsymm}. Fortunately, $R$ is a very
accurate approximation to $R_{\textnormal{ideal}}$ for problems tested here, making interpolation less important.} We
propose a ``one-point interpolation'' scheme, where each F-point is interpolated by value from its strongest C-connection.
One-point interpolation resembles a degree-zero Neumann expansion, but $P$ is sparser, having exactly one nonzero per
row, and each nonzero is set to one as opposed to the value of $A_{fc}$.\footnote{One-point interpolation also resembles
an unsmoothed aggregation interpolation operator, but here some ``aggregates'' may have many points and others only
one. In aggregation-based AMG, aggregates are typically chosen to be comparable in size, and there are never aggregates
of size one that have strong connections in the matrix.} This ensures that the constant is in the range of interpolation, with
minimal nonzero requirements of $P$. In practice, one-point interpolation performs \tcb{best compared to} many different
interpolation methods tested in terms of total work and time to solution. \tcb{In fact, $\ell$AIR was shown to have good 
approximation properties for scalar hyperbolic problems in \cite{nonsymm}, and using these ideas to develop improved
interpolation methods for hyperbolic problems is ongoing work.}

\subsection{Blocks, filtering, and parallelization}\label{sec:alg:extra}

Some PDE discretizations lead to matrix equations with a natural block structure. The two most common examples are: (i) systems of
PDEs, where a block in the matrix corresponds to multiple variables discretized on a single spatial node, or (ii) block discontinuous
discretizations, such as discontinuous Galerkin (DG), where each finite element forms a block in the matrix. In either case, a block lower
triangular matrix can be transformed to lower triangular by scaling the system by the block-diagonal inverse, or $n$AIR can be performed
in the block setting, coarsening and forming transfer operators by block. For most results, we scale by the block-diagonal inverse because
the two approaches have shown comparable convergence factors, and forming $n$AIR in the setup phase is cheaper and simpler in the
scalar (non block) matrix case. However, Section \ref{sec:results:scale} shows results for block $n$AIR as well, where coarsening,
restriction, interpolation, and relaxation are all done in a block fashion.

One way to further reduce complexity in an AMG solver is truncating or lumping operators. The idea is simple:
remove entries from a matrix in the hierarchy, $A_\ell$, that are smaller than some threshold, typically with respect to the diagonal
element of the given row. Such methods have been used in AMG for symmetric problems with diffusive components \cite{Bienz:2015ve,
Falgout:2014uz,Treister:2015cp}). Heuristically, eliminating small entries is even more appropriate in the hyperbolic setting, because
the solution at any given point only depends on the solution at other points upwind along the characteristic. In the discrete setting,
small entries that arise in matrix operations are often not aligned with the characteristic and are more of a numerical effect, suggesting
that some can be eliminated without degrading convergence. Numerical results confirm this; in particular, removing
entries in the case of SPD matrices is a delicate process \cite{Bienz:2015ve,Falgout:2014uz,Treister:2015cp}, but Section \ref{sec:results}
shows that entries can be removed from discretizations of steady state transport aggressively, without a degradation in convergence.  
For some drop-tolerance $\varphi$, elements $\{a_{ij} \text{ $|$ } j\neq i, |a_{ij}| \leq \varphi|a_{ii}|\}$ are eliminated (that is, set to zero)
for each row $i$ of matrix $A_\ell$. A drop tolerance of $\varphi=10^{-3}$ has proved to be effective for many problems tested, and is
used for all results presented here. 

Finally, $n$AIR applied to triangular systems is intended for highly parallel environments, where traditional triangular solves are not easily
parallelized. This work focuses on algorithmic details and theory, and does not develop parallel performance models or present
parallel scaling results. However, it is well known that AMG scales in parallel to hundreds of thousands of cores \cite{Baker:2012ko},
with a communication cost of $O(\log P)$, for $P$ processors \cite{Falgout:2000hs,Henson:2002vk}. The algorithm developed here takes
on the form of a traditional AMG
method, with the additional cost of building and storing a restriction operator, which can be performed efficiently in parallel.

\section{Numerical results}\label{sec:results}

In this section, we apply $n$AIR to discontinuous discretizations of the steady state transport equation. For problems in which the
steady state is well posed (no cycles in flow), the steady state case is equivalent to the time dependent problem with an infinite time step.
In this context,
successful results on steady state flow also indicate that $n$AIR is applicable in the time-dependent regime with implicit time-stepping
schemes of arbitrary step size. 

The computational cost or complexity of an AMG algorithm is typically measured in \textit{work units} (WU),
where one WU is the cost to perform one sparse matrix-vector multiplication with the initial matrix. Operator complexity 
(OC) gives the cost in WUs to perform one sparse matrix-vector multiplication on each level in an AMG hierarchy, and
cycle complexity (CC) gives the cost in WUs to perform one AMG iteration, including pre- and post-relaxation,
computing and restricting the residual, and coarse-grid correction. For convergence factor $\rho$, the
\textit{work-unit-per-digit-of-accuracy} (WPD), $\chi_{wpd}$, is an objective measure of AMG performance, giving the total
WUs necessary to achieve an order-of-magnitude reduction in the residual:\footnote{Although WPD is a good
measure of serial performance, it does not reflect parallel efficiency of the algorithm.}
\begin{align*}
\chi_{wpd} := -\frac{CC}{\log_{10}(\rho)}  := -\frac{1}{\log_{10}(\rho_{eff})},
\end{align*}
{where $\rho_{eff} = \rho^{1/CC}$ is the {\it effective convergence factor}.}
\
\subsection{Test problems and discretizations} \label{sec:results:disc}

The model problem used here is the steady state transport equation:
\begin{align}\label{eq:transport}
\begin{split}
\mathbf{b}(x,y) \cdot\nabla u + c(x,y)u & = q(x,y) \hspace{3ex}{\cal D}, \\
u & = g(x,y) \hspace{3ex}\Gamma_{\textnormal{in}},
\end{split}
\end{align}
for domain ${\cal D}$ and inflow boundary $\Gamma_{\textnormal{in}}$. 
Multiple cases are studied that encompass spatially dependent source terms, $q(x,y)$,
discontinuities in the material coefficient, $c(x,y)$, and constant and non-constant flow direction, $\mathbf{b}(x,y)$, over
structured and unstructured meshes. \tcb{When $\mathbf{b}(x,y)$ is constant, we denote $\mathbf{b}(x,y) = \Omega(\theta) :=
(\cos(\theta),\sin(\theta))$, for some angle $\theta$.}
Two model domains are considered, the \textit{inset} domain and \textit{block-source}
domain, shown with solutions in Figure \ref{fig:domains}. In each domain, inflow boundaries consist of the south and west
boundaries with inflow $u=1$, and the material coefficient $c(x,y)$ is piecewise constant in both cases, with changes
of eight orders of magnitude, as shown in Figure \ref{fig:domains}. The \textit{inset} domain has no source ($q = 0$), while
the \textit{block-source} domain has an interior source $q(x,y) = 1$ in the interior block. 
These terms ($c(x,y)$ and $q(x,y)$) are fixed for all experiments, except the non-triangular case (Section
\ref{sec:results:extra}). Multiple velocity fields $\mathbf{b}(x,y)$ are considered. Solutions with constant flow are
shown in Figure \ref{fig:domains}, and several variations of the \textit{inset} domain with non-constant flow are shown in
Figure \ref{fig:flows} in Section \ref{sec:results:ang}. All numerical experiments use $c(x,y)$ and $q(x,y)$ as
specified in Figure \ref{fig:domains}, but multiple velocity fields $\mathbf{b}(x,y)$ are considered.

\begin{figure}[!ht]
  \centering
    \begin{subfigure}[b]{0.49\textwidth}
      \begin{subfigure}[b]{0.475\textwidth}
        \includegraphics[width=\textwidth]{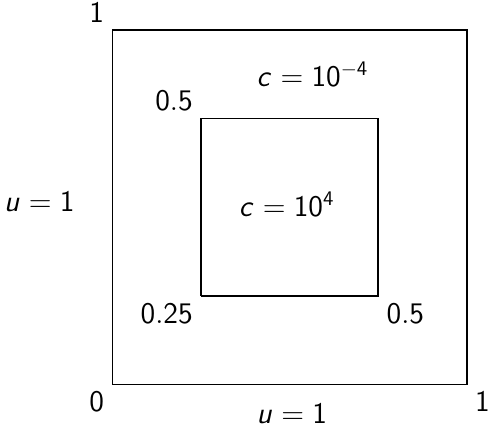}
      \end{subfigure}
      \hfill
       \begin{subfigure}[b]{0.475\textwidth}
        \includegraphics[width=\textwidth]{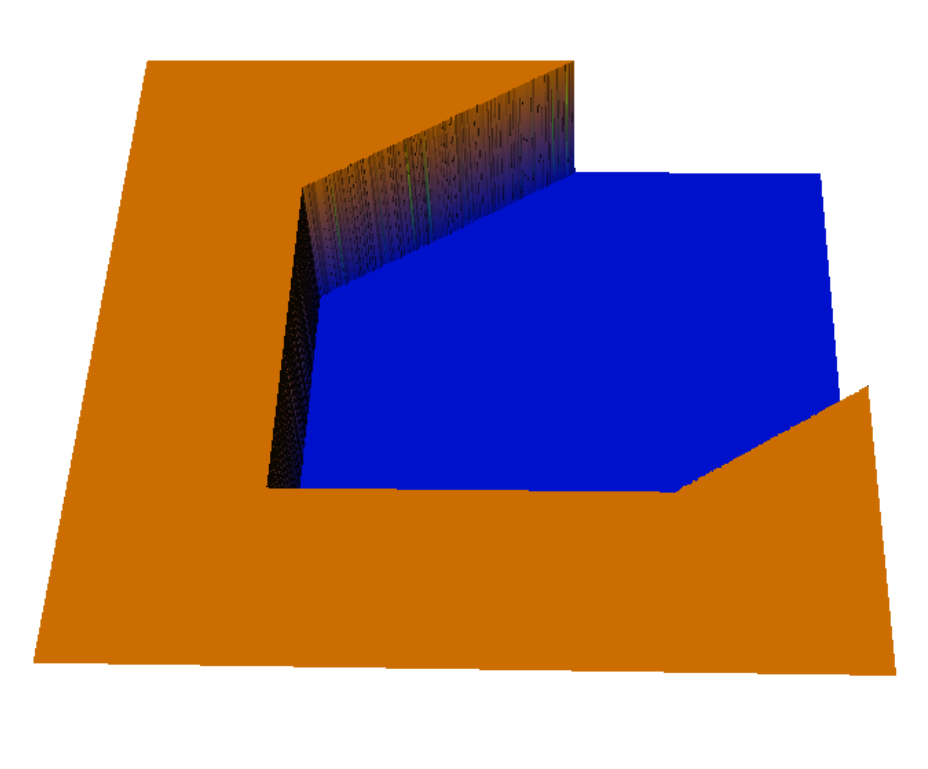}
      \end{subfigure}
  \caption{\textit{Inset} domain}
  \label{fig:inset}
  \end{subfigure}
    \begin{subfigure}[b]{0.49\textwidth}
      \begin{subfigure}[b]{0.475\textwidth}
        \includegraphics[width=\textwidth]{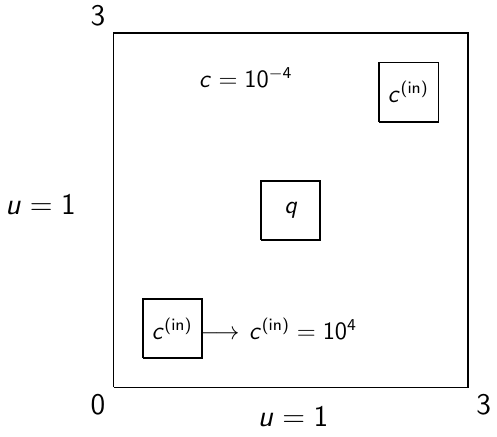}
      \end{subfigure}
      \hfill
       \begin{subfigure}[b]{0.475\textwidth}
        \includegraphics[width=\textwidth]{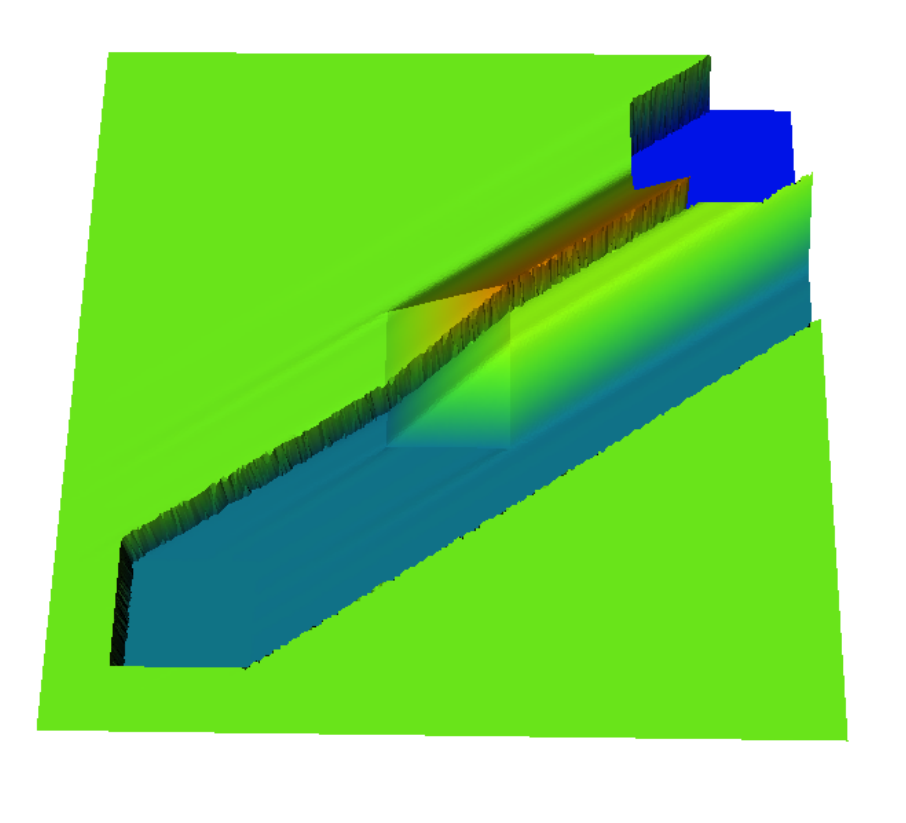}
      \end{subfigure}
  \caption{\textit{Block-source} domain}
  \label{fig:blocksource}
  \end{subfigure}
    \caption{Two domains for the steady state transport equation, with a constant velocity field,
  $\mathbf{b}(x,y) = (\cos(\theta),\sin(\theta))$, $\theta = 3\pi/16$, source $q = 1$, and the respective solutions.}
  \label{fig:domains}
  \end{figure}

To accompany the different domains considered, multiple upwind discretizations are implemented. A first-order lumped corner balance (LCB)
finite element
discretization \cite{Morel:2005tv,Morel:2007jj} is applied on structured and unstructured meshes. Standard fully upwinded discontinuous
Galerkin (DG) discretizations \cite{lesaint_raviart_1974,reed_hill_1973} are also tested, with finite element orders $1-6$.
A comprehensive introduction can be found in \cite{pietro_ern_2011}. Standard upwinded DG
methods arise as special cases in \cite{Brezzi:2004hf} and for almost-scattering-free problems in \cite{Ragusa:2012gn}.
The structured meshes used are triangular crossed-square meshes, conforming to the material discontinuities in $c(x,y)$, while
random triangulations, again conforming to material discontinuities, are used as unstructured meshes. Additional discretizations
based on highly elongated meshes with curvilinear elements, as well as continuous (linear) elements with artificial diffusion, are
briefly explored in Section \ref{sec:results:extra}.

As motivation for $n$AIR, we first highlight the difficulties that existing varieties of AMG face solving these discretizations. Tests
were run using the PyAMG library \cite{Bell:2008}, \textit{hypre} \cite{Falgout:2002vu}, and ML \cite{ml}. Classical AMG methods are not
well developed for the nonsymmetric setting; methods such as BoomerAMG in \textit{hypre}
\cite{Henson:2002vk} use a Galerkin, $P^TAP$ coarse grid, and are able to solve discontinuous transport discretizations based on linear finite
elements, with convergence factors on the order of 0.8--0.9. However, convergence is not scalable and does not extend beyond
linear elements. Aggregation- and energy-minimization-based AMG methods are better developed for nonsymmetric problems. The most
successful existing solver appears to be the nonsymmetric smoothed aggregation (NSSA) solver in the ML library \cite{ml,Sala:2008cv,Wiesner:2014cy}.
With GMRES acceleration, NSSA is able to converge on most problems tested here. In all cases, however, NSSA takes several times more
iterations than $n$AIR and often requires significant relaxation, such as a $V(3,3)$-cycle, for good convergence. For difficult problems, $n$AIR offers
a speedup of $5\times$ or more over the current state-of-the-art.

\subsection{Angular variation, non-constant flow, and 3d} \label{sec:results:ang}

Problems in higher dimensions, anisotropies on unstructured meshes, and non-grid-aligned anisotropies can
prove difficult for AMG solvers \cite{Manteuffel:2017,Schroder:2012di}. Here, we show $n$AIR to be robust in all of
these cases. Figure \ref{fig:angular} shows performance of $n$AIR for LCB discretizations of the \textit{inset} problem
on structured and unstructured meshes, with fixed angle $\Omega := \mathbf{b}(x,y) = (\cos(\theta),\sin(\theta))$,
and angles $\theta\in[0,\sfrac{\pi}{2}]$. Because unstructured meshes are often used in practice and typically
more difficult from a solver perspective, further results use an unstructured mesh and the angle is (arbitrarily) fixed to
$\theta = \sfrac{3\pi}{16}$. \footnote{For some angles, $n$AIR converges faster on an unstructured mesh than a structured mesh. However, 
the wall-clock time of the setup and solve phase is at least $2\times$ faster in \textit{all cases} for a structured mesh
over an unstructured mesh. It is possible that a structured mesh makes for a more structured matrix amenable to matrix-vector
operations, but a detailed analysis is outside the scope of this work.}

\begin{figure}[!th]
  \centering
  \begin{subfigure}[b]{0.425\textwidth}
    \includegraphics[width=\textwidth]{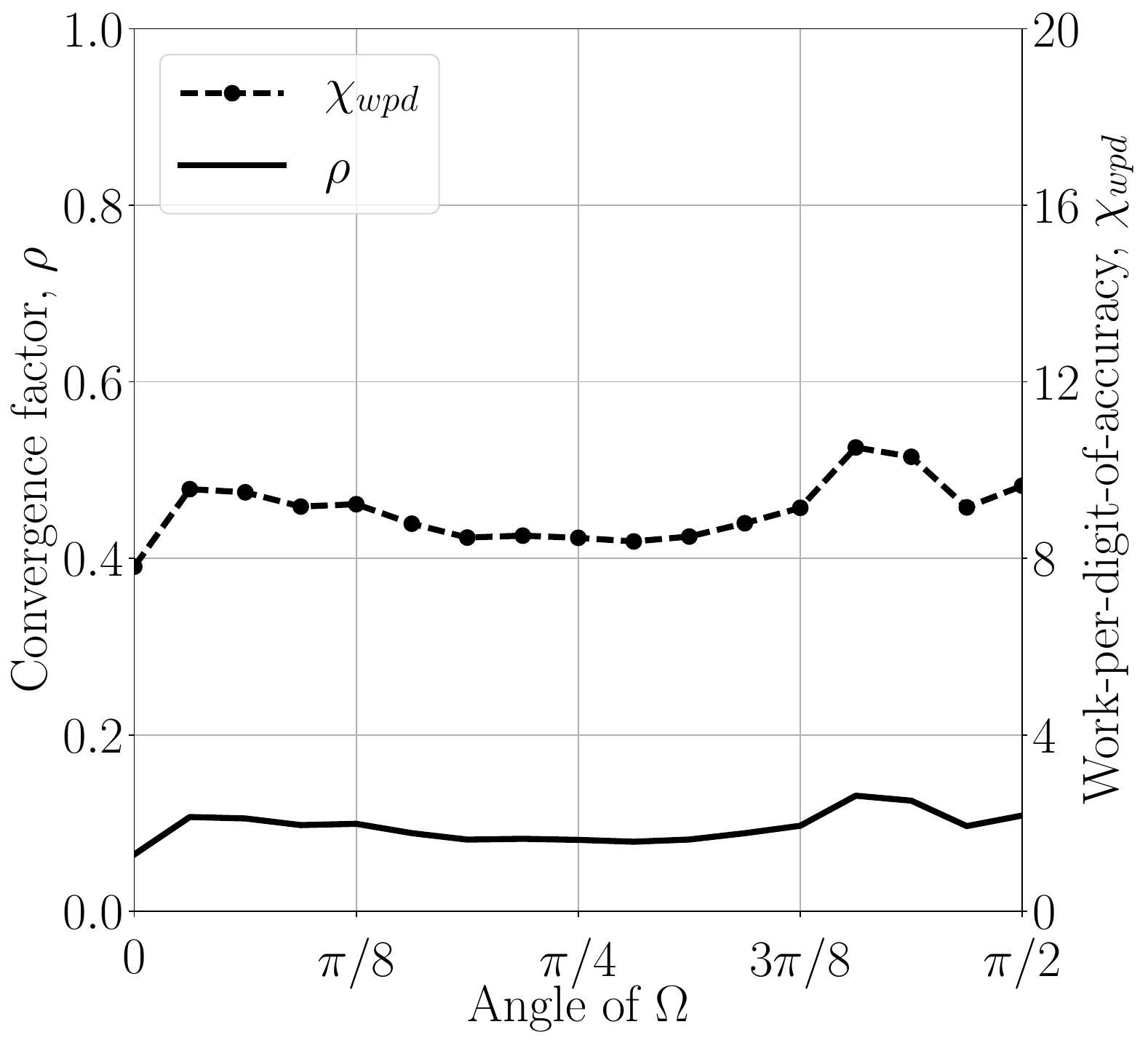}
    \caption{LCB unstructured mesh}\label{fig:angular:unstruct}
  \end{subfigure}
  \hspace{2ex}
    \begin{subfigure}[b]{0.425\textwidth}
    \includegraphics[width=\textwidth]{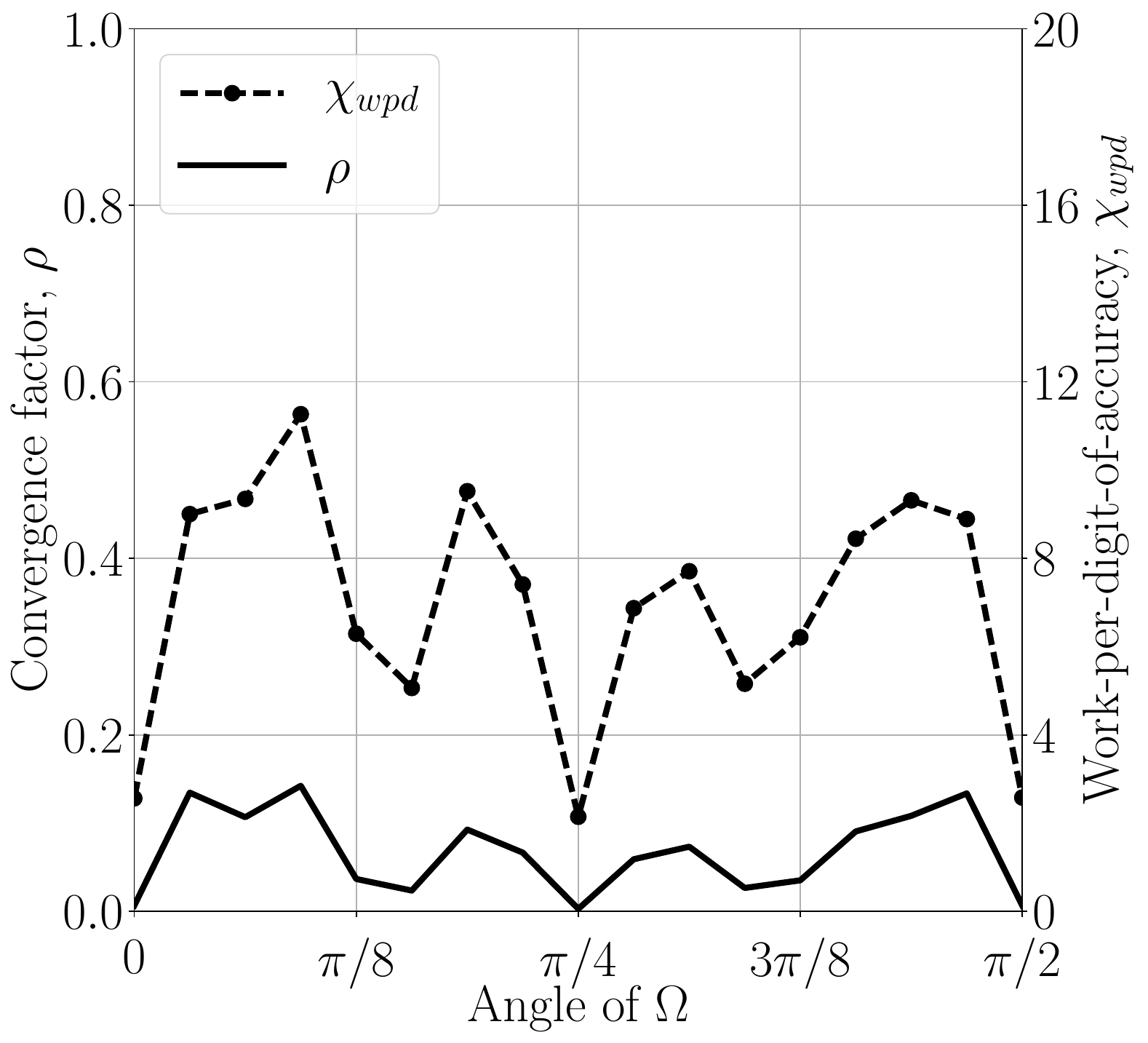}
    \caption{LCB structured mesh}\label{fig:angular:struct}
  \end{subfigure}
  \caption{Convergence factors and WPD for $n$AIR applied to
  LCB discretizations of the \textit{inset} problem, with angles between $0$ and $\sfrac{\pi}{2}$, on
  unstructured and structured meshes, and $\approx 2.25$M DOFs.}
  \label{fig:angular}
\end{figure}

In addition to being robust with respect to angular variations, $n$AIR is insensitive to
flow direction and problem dimensionality. Figure \ref{fig:flows} shows the solution to three different
non-constant flows defined on the \textit{inset} domain, and the corresponding performance of $n$AIR, along with results for
a fixed direction on the inset and block-source domain.

Table \ref{tab:3d} shows results of $n$AIR applied to steady state transport in three dimensions for different finite element orders.
The three-dimensional domain is a unit cube, with $c(x,y) = 10^4$ inside of a centered interior cube of size
$0.5\times 0.5\times 0.5$, and $c(x,y) = 10^{-4}$ outside of that subdomain, similar to the 2d \textit{inset} domain
(Figure \ref{fig:domains}). A random tetrahedral mesh is used, conforming to discontinuities in $c(x,y)$, and a constant
velocity field, $\mathbf{b}(x,y) = \left(\sin(\theta_1)\cos(\theta_2),\sin(\theta_1)\sin(\theta_2),\cos(\theta_1)\cos(\theta_2)\right)$.
As in 2d, choice of $\theta_1$ and $\theta_2$ does not affect results on an unstructured mesh (see Figure \ref{fig:angular});
here we use $\theta_1=\theta_2 = 3\pi/16$.
In all cases, $n$AIR is able to achieve fast convergence at a moderate cost. Due to the increased matrix connectivity in three dimensions,
filtering is particularly useful here, reducing WPD by a factor of four or more in all cases, and total time-to-solution (not shown) by factors of 3--4.

This is also a good example to demonstrate the speedup that $n$AIR can provide over $\ell$AIR in setup time. For 3rd-order
elements in 3D, with about 2M DOFs, distance-1 $\ell$AIR takes 2995 seconds to build the solver and 43 seconds to solve to
$10^{-12}$ residual tolerance. Distance-one $n$AIR (approximately corresponding to distance-two $\ell$AIR) takes only 38
second to setup and 52 seconds to solve. Despite convergence factors that are slightly larger with $n$AIR compared to $\ell$AIR,
the overall time to solution is significantly smaller. Furthermore, it is simple and moderately cheap to go from distance-one to
distance-two $n$AIR; for example, the setup time here increases modestly to about 60 seconds with distance-two $n$AIR, while
distance-three $\ell$AIR (approximately corresponding to distance-two $n$AIR) is completely intractable. 

\begin{figure}[!h]
  \centering
  \begin{subfigure}[b]{0.25\textwidth}
    \includegraphics[width=\textwidth]{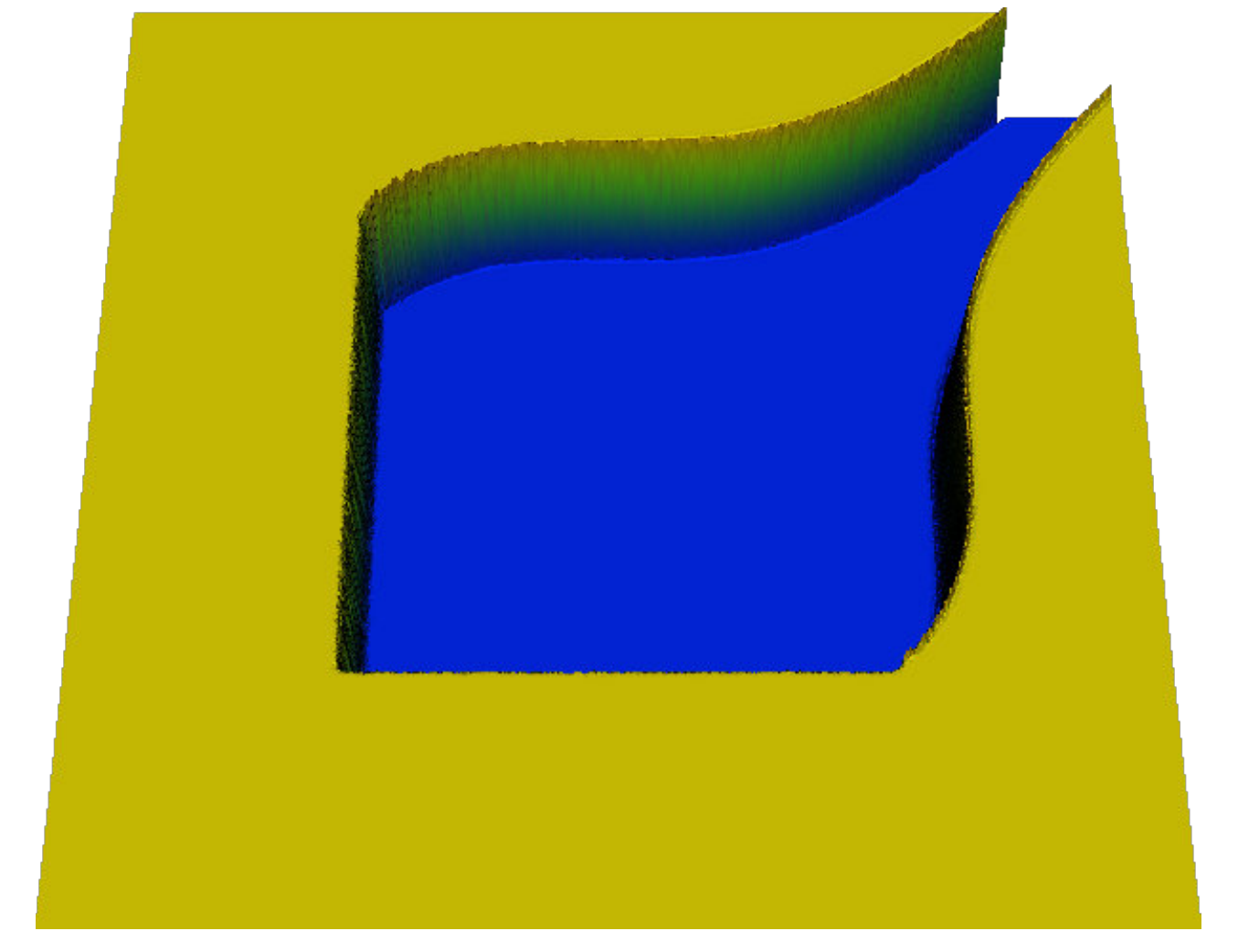}
    \caption{\centering$\mathbf{b}_1(x,y) =$ $(\cos(\pi y)^2, \cos(\pi x)^2)$.}
    \vspace{3ex} 
  \end{subfigure}
  \hspace{1ex}
  \begin{subfigure}[b]{0.25\textwidth}
    \includegraphics[width=\textwidth]{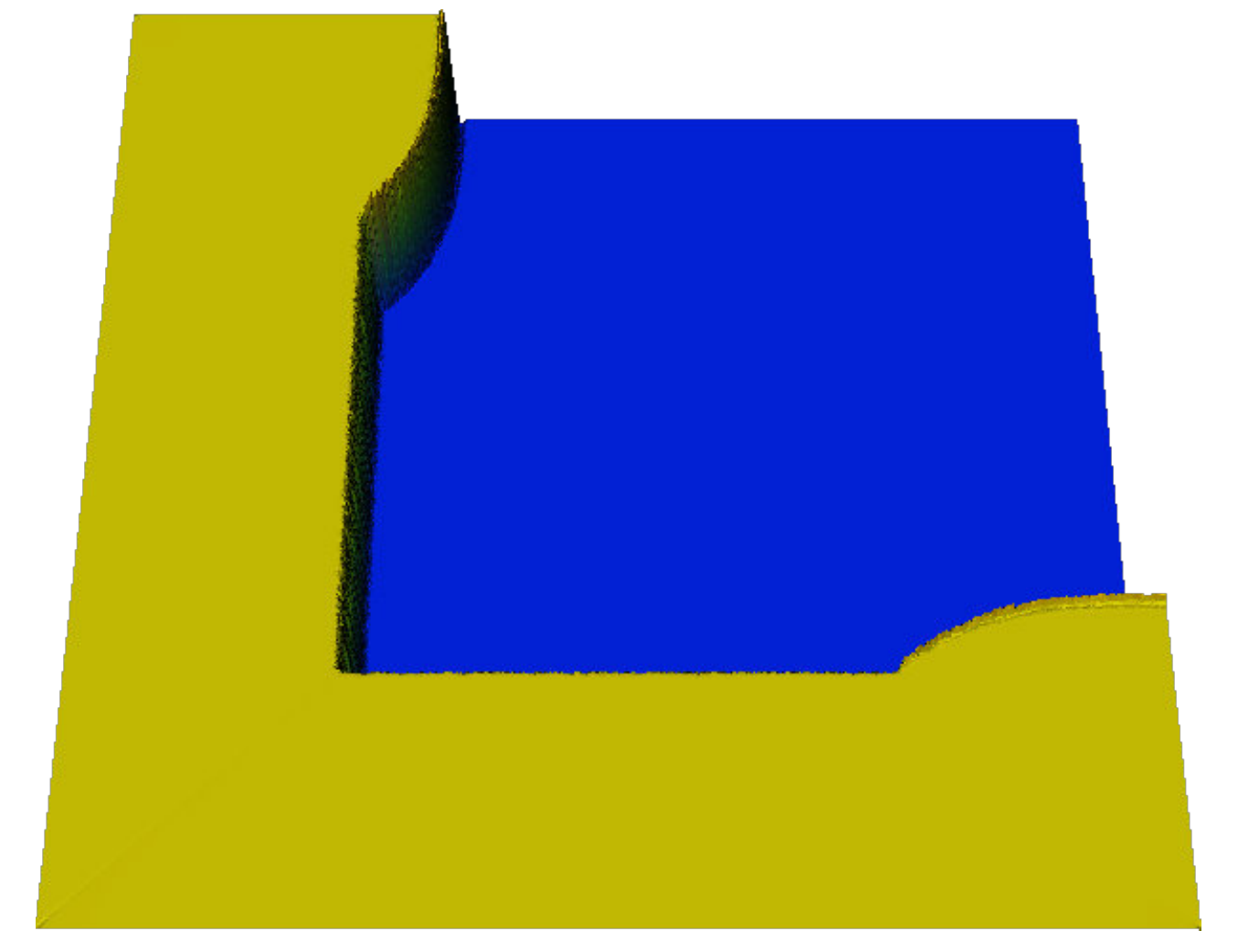}
    \caption{\centering$\mathbf{b}_2(x,y) =$ $(\sin(\pi y)^2, \sin(\pi x)^2)$.}
    \vspace{3ex} 
  \end{subfigure}
  \hspace{1ex}
  \begin{subfigure}[b]{0.25\textwidth}
    \includegraphics[width=\textwidth]{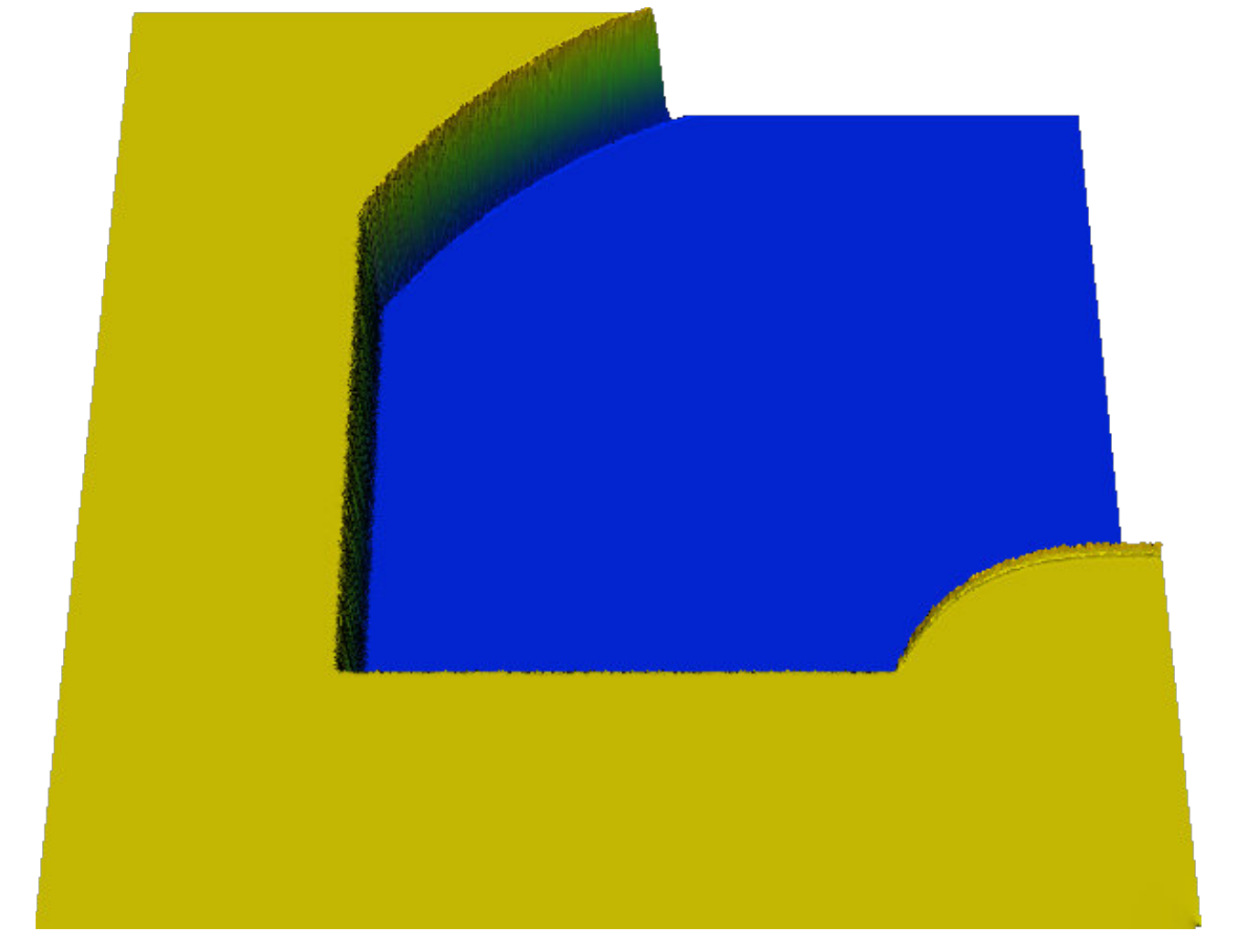}
    \caption{\centering$\mathbf{b}_3(x,y) =$ $(y^4, \cos(\sfrac{\pi x}{2})^2)$.}
    \vspace{3ex} 
  \end{subfigure}
{
\renewcommand{\tabcolsep}{5pt}
\begin{tabular}{|c | c c c c c |}\toprule
$\mathbf{b}(x,y)$ & $\Omega_{\textnormal{inset}}$ & $\mathbf{b}_1(x,y)$ & $\mathbf{b}_2(x,y)$ & $\mathbf{b}_3(x,y)$ & $\Omega_{\textnormal{block-source}}$ \\\midrule
$\rho$ 				  & 0.20 & 0.26 & 0.17 & 0.24 & 0.25\\
CC					  & 6.83 & 6.77 & 6.80 & 6.81 & 7.53\\
$\chi_{\textnormal{WPD}}$ & 9.68 & 11.46 & 8.89 & 10.89 & 12.36 \\\bottomrule
\end{tabular} 
}
\caption{Convergence factor, CC, and WPD, for $n$AIR applied to variations in flow direction, $\mathbf{b}(x,y)$, on the inset domain, and constant flow direction
	$\Omega =(\cos(\sfrac{3\pi}{16}),\sin(\sfrac{3\pi}{16}))$ on both domains. All discretizations have $\approx 9$M DOFs. }
\label{fig:flows}
\end{figure}

\begin{table}[!ht]
\color{black}
\centering
\begin{tabular}{|c c c c r r r c |}\toprule
degree FEM & $n$ & nnz & $\rho$ & CC & $\chi_{\textnormal{WPD}}$ & $\varphi$ & Speedup\\ \midrule
1 & 2.5M & 24M & 0.09 & 42.4 & 41.7 & 0 & -- \\
1 & 2.5M & 24M & 0.10 & 10.9 & 11.0 & 1e-3 & 3.9 \\\midrule
2  & 1.9M & 41M & 0.12 & 38.4 & 42.8 & 0 & -- \\
2  & 1.9M & 41M & 0.13 & 8.5 & 9.9 & 1e-3 & 4.3 \\\midrule
3 & 2.2M & 85M & 0.14 & 32.6 & 38.2 & 0 & -- \\
3 & 2.2M & 85M & 0.17 & 6.8 & 8.9 & 1e-3 & 4.3 \\\bottomrule 
\end{tabular} 
\caption{(3D) $n$AIR applied to first-, second-, and third-order discretizations of steady state transport in three dimensions.
The final column shows the speedup due to filtering in terms of WPD. The rows in each block differ in drop tolerance, $\varphi$. }
\label{tab:3d}
\end{table}

\subsection{Scaling in $h$ and element order}\label{sec:results:scale}

Next, we study the scaling of $n$AIR with respect to DOFs. One exciting feature of
$n$AIR is its ability to solve high-order finite-element discretizations, something that AMG methods often struggle with. 
Here, V-cycles and F-cycles are considered, with degree of $n$AIR $k=1,2$ and $3$. Although F-cycles originate in full multigrid, which focuses
on achieving discretization-level accuracy in a single F-cycle \cite{BrHeMc2000}, accuracy with respect to discretization is not considered in this work.
Instead, the F-cycle is used as it can provide more robust convergence and scaling than a V-cycle, at a much lower cost than alternatives
such as W-cycles and K-cycles. Figure \ref{fig:scale} shows scaling of WPD and convergence factor of $n$AIR applied to upwind
DG discretizations of the inset problem, as a function of number of DOFs.
Although there remains a slow growth in WPD, likely due to an increase in iterations before asymptotic convergence rates are achieved,
convergence factors have effectively asymptoted for lower-order finite elements, and are leveling off even for 6th-order finite elements.  

\begin{figure}[!htb]
  \centering
  \begin{subfigure}[b]{0.45\textwidth}
    \includegraphics[width=\textwidth]{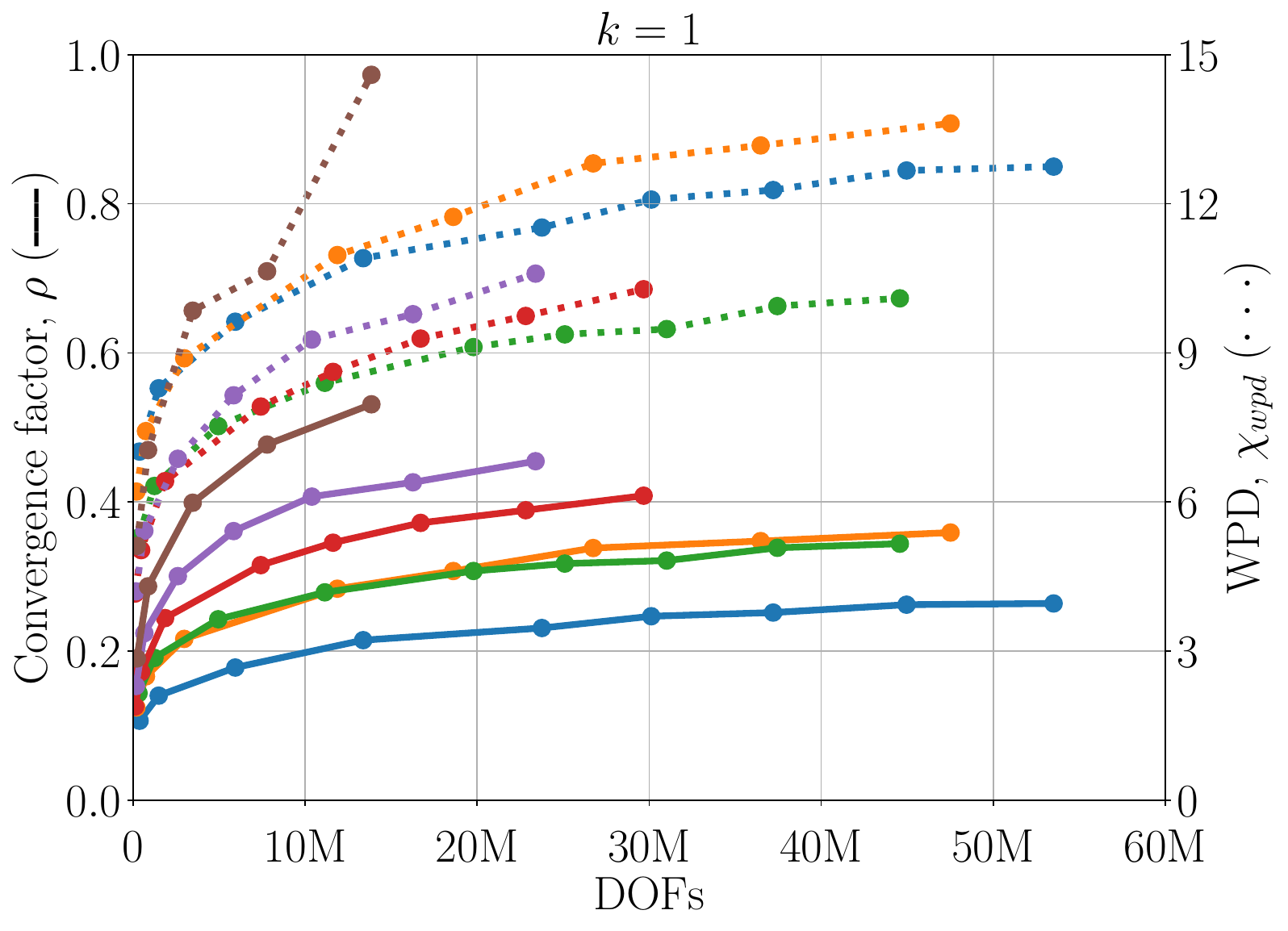}
    \includegraphics[width=\textwidth]{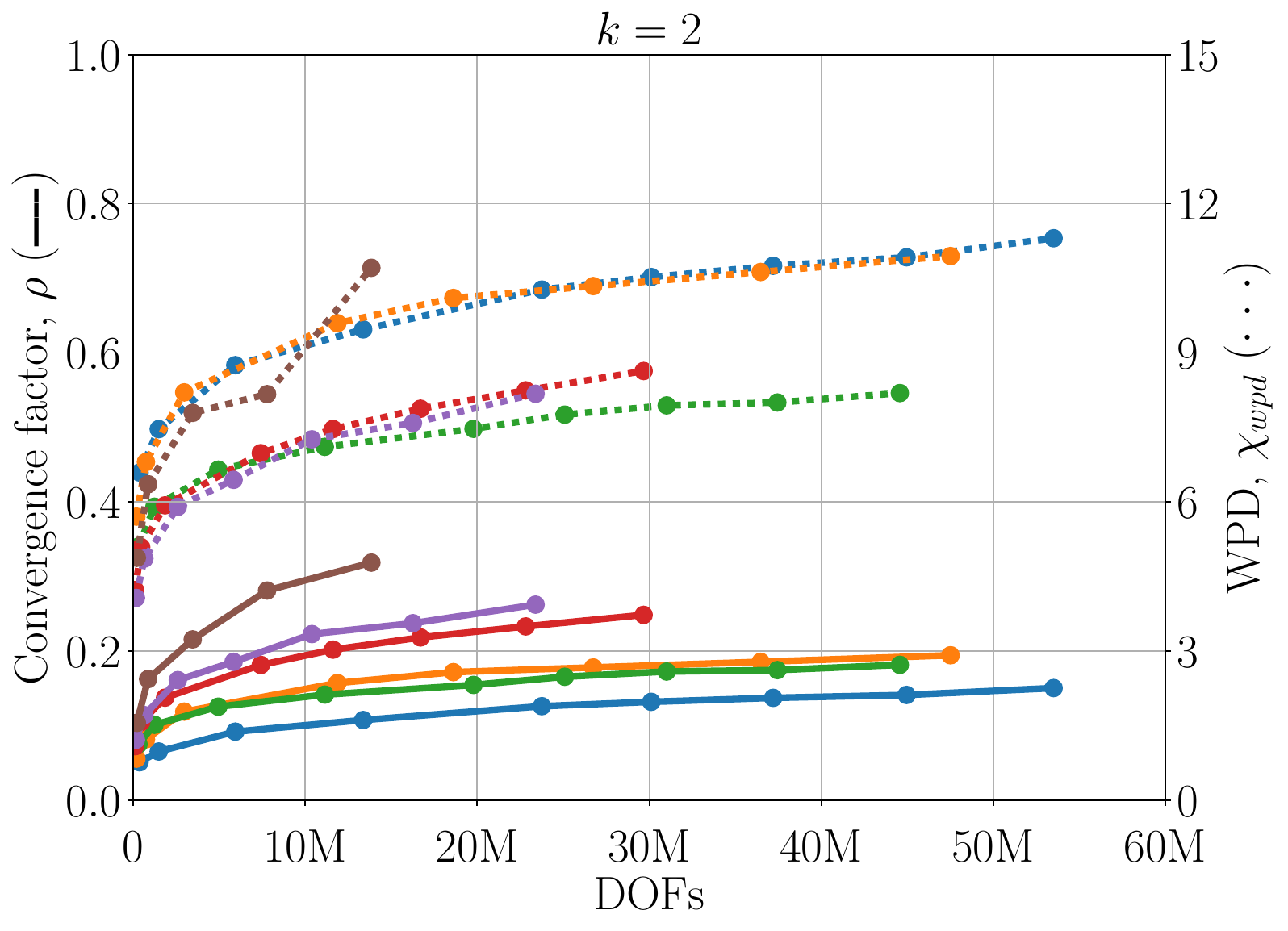}
    \includegraphics[width=\textwidth]{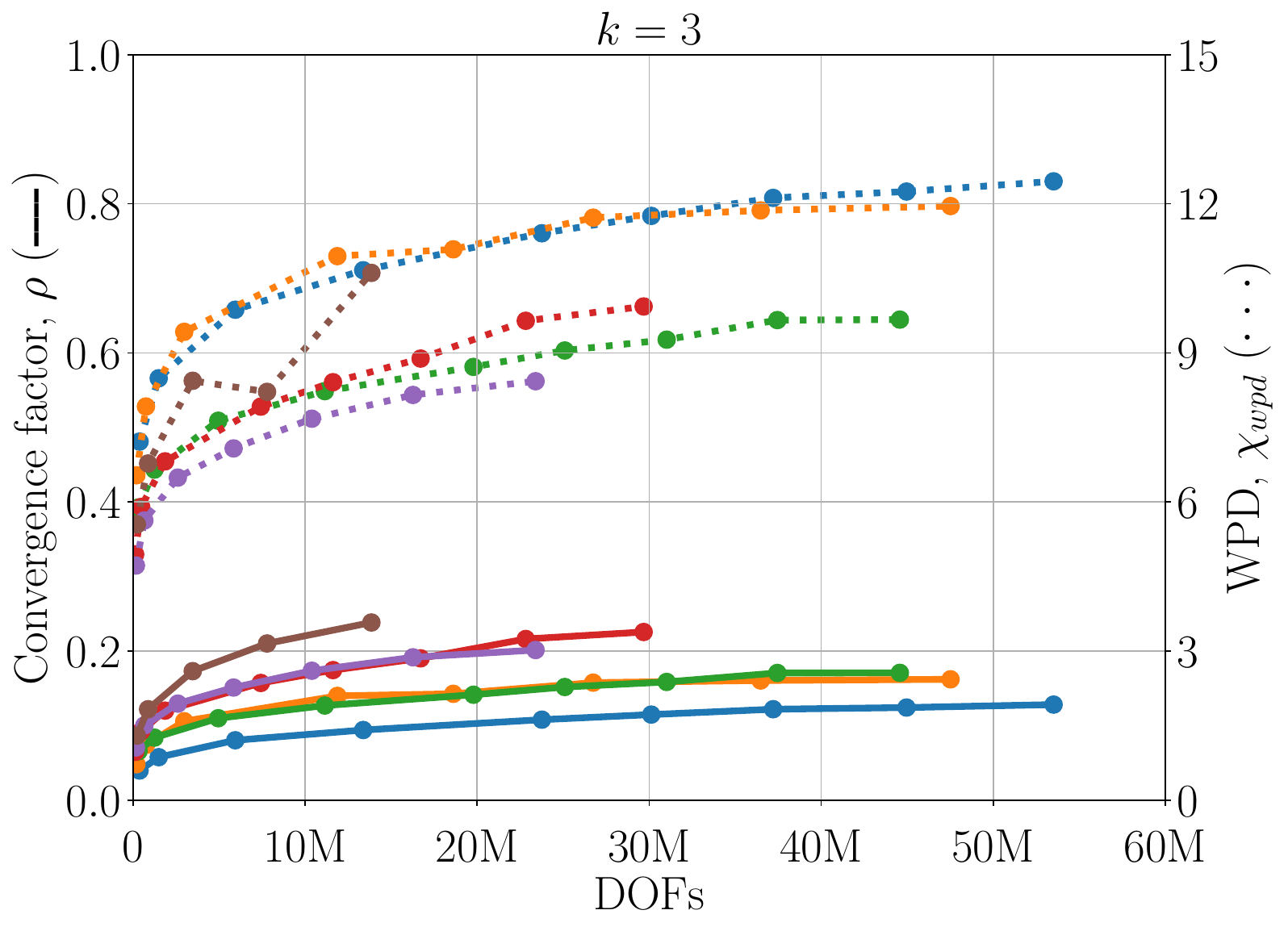}    
    \caption{V-cycle}
  \end{subfigure}
  \hfill
  \begin{subfigure}[b]{0.45\textwidth}
    \includegraphics[width=\textwidth]{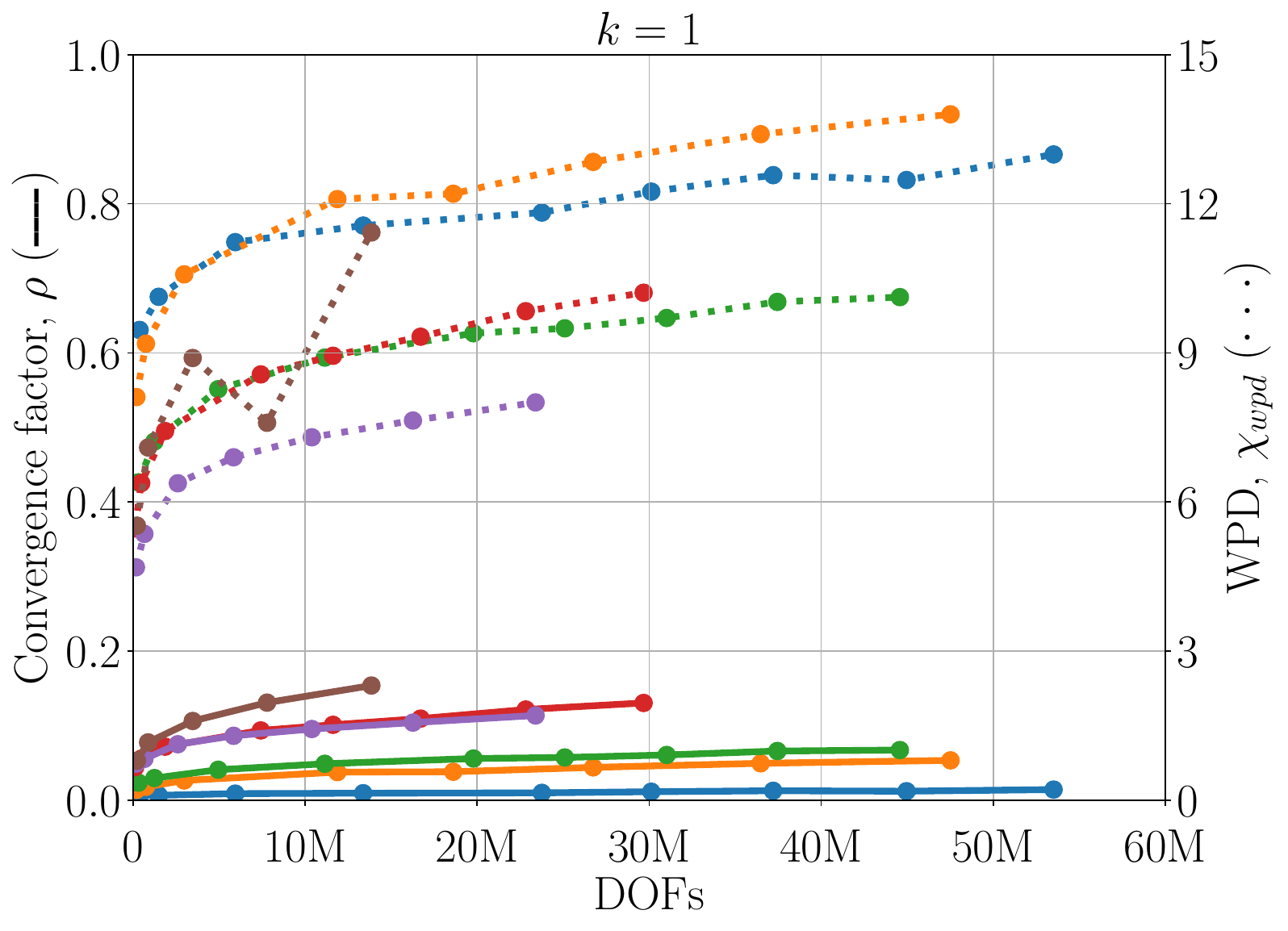}
    \includegraphics[width=\textwidth]{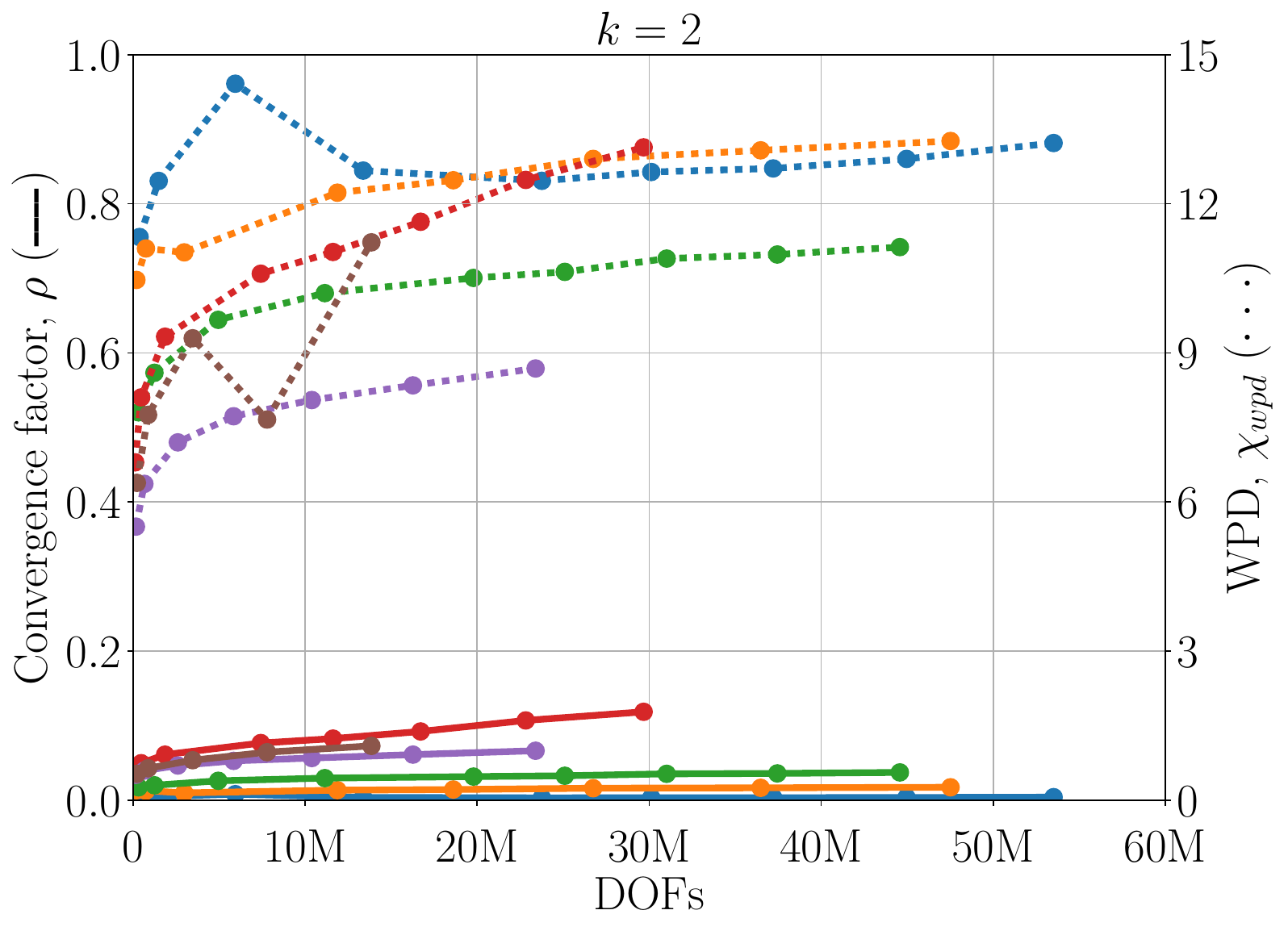}
    \includegraphics[width=\textwidth]{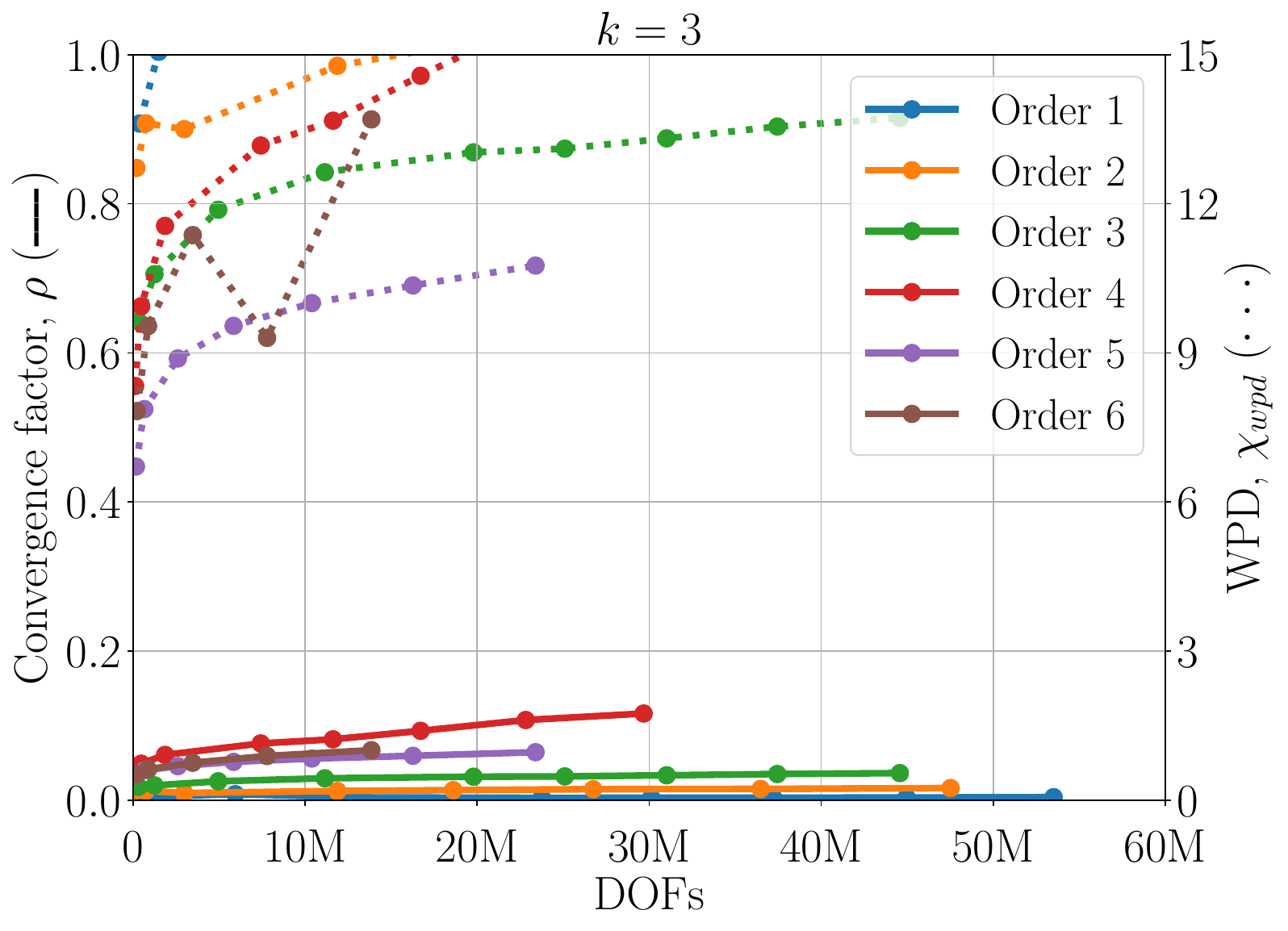}    
    \caption{F-cycle}
  \end{subfigure}
  \caption{Scaling of convergence factor (solid lines) and WPD (dotted lines) as a function of DOFs,
  for $n$AIR applied to upwind DG discretizations of the inset problem, with $n$AIR degrees $k=1,2$ and $3$, 
  V-cycles and F-cycles, and finite-element degrees 1--6. }
  \label{fig:scale}
\end{figure}

One benefit of the reduction approach is that convergence of $n$AIR can be improved by increasing the accuracy of the Neumann
approximation. Figure \ref{fig:scale} demonstrates that convergence is improved a notable amount by increasing $k=1$ to
$k=2$, and again for $k=2$ to $k=3$. This allows us to attain V-cycle convergence factors on the order of $\rho\approx0.2$ for
6th-order finite elements. Similar improvements in convergence can be obtained by decreasing the strength tolerance, $\phi$,
leading to a more accurate approximation of $A_{ff}^{-1}$. Because increasing $k$ also increase the density of coarse-grid
operators, in practice, one must find the balance between complexity and convergence factors. Here, we see that the V-cycle
with $k=2$ appears to be the most effective choice, \tcb{because the WPD is less than that of $k=1,3$, or F-cycles for all
finite element orders. The best choice in terms of total time (including setup) likely depends on how many linear
systems are being solved, and it is possible $k=1$ is faster for a single or small number of systems.} Note that having the option to make
this choice is a benefit of AIR, because, in general, classical AMG methods do not have a natural and robust way to
improve convergence the way that increasing $k$ can. 

So far all results have employed scaling the matrix by the block-diagonal inverse. However, $n$AIR is also amenable to a
block implementation, where coarsening, restriction, and interpolation are all done by block. This is particularly relevant for
systems of PDEs with coupled variables, where scaling out the block diagonal is unlikely to capture the necessary couplings.
Figure \ref{fig:blockscale} demonstrates block $n$AIR applied to the same problem
as in  Figure \ref{fig:scale}, this time directly using the DG block structure, for orders 1, 2, and 3 finite elements. Convergence
factors are similar to those achieved by scaling by the block diagonal inverse. The WPD is higher because we do not
filter in the block setting, and because, when using a block sparse matrix, even zero entries in otherwise nonzero blocks must
be stored. This results in a larger OC and, thus, larger WPD. 

\begin{figure}[!htb]
  \centering
    \includegraphics[width=0.45\textwidth]{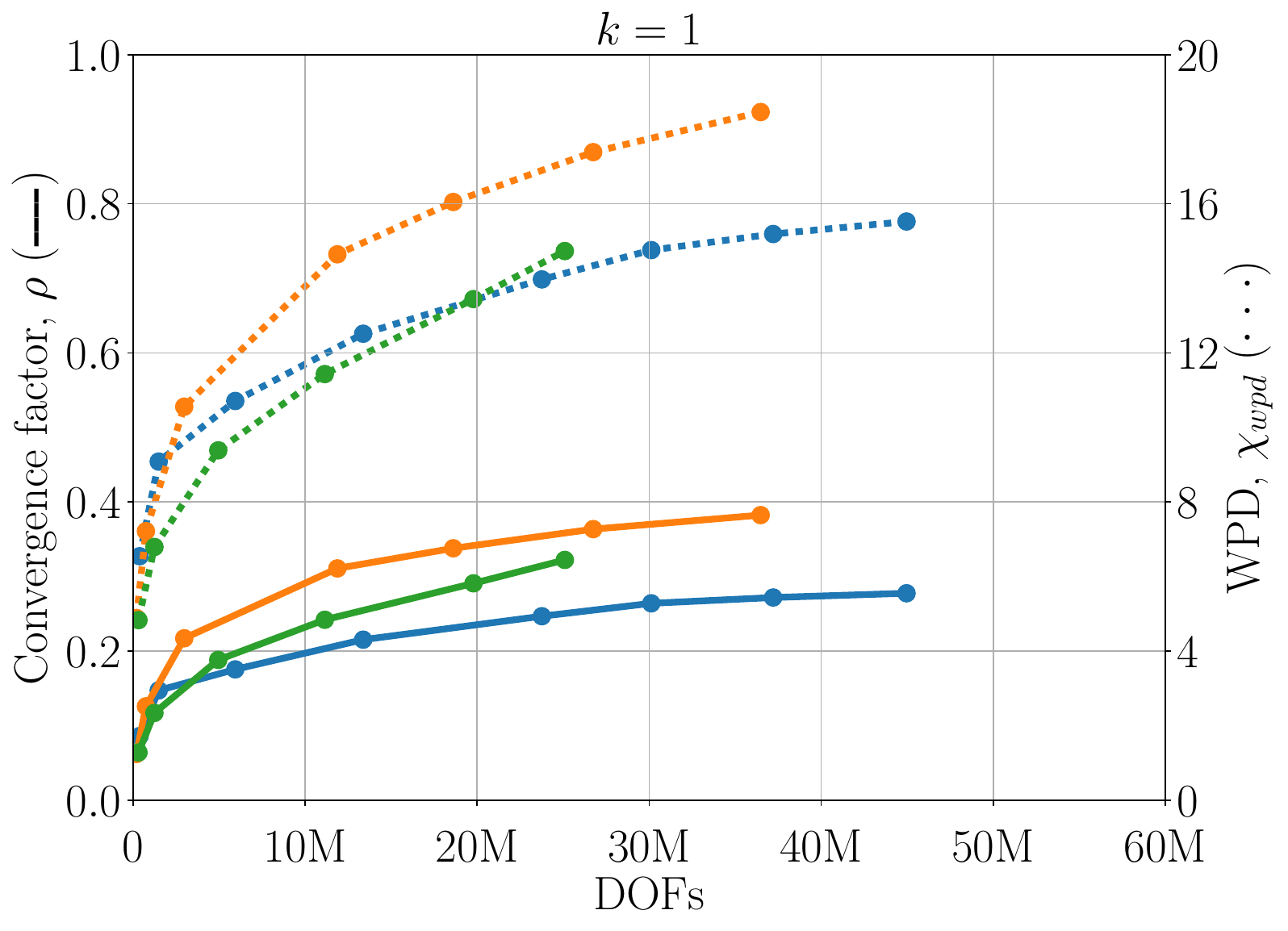}
    \hfill
    \includegraphics[width=0.45\textwidth]{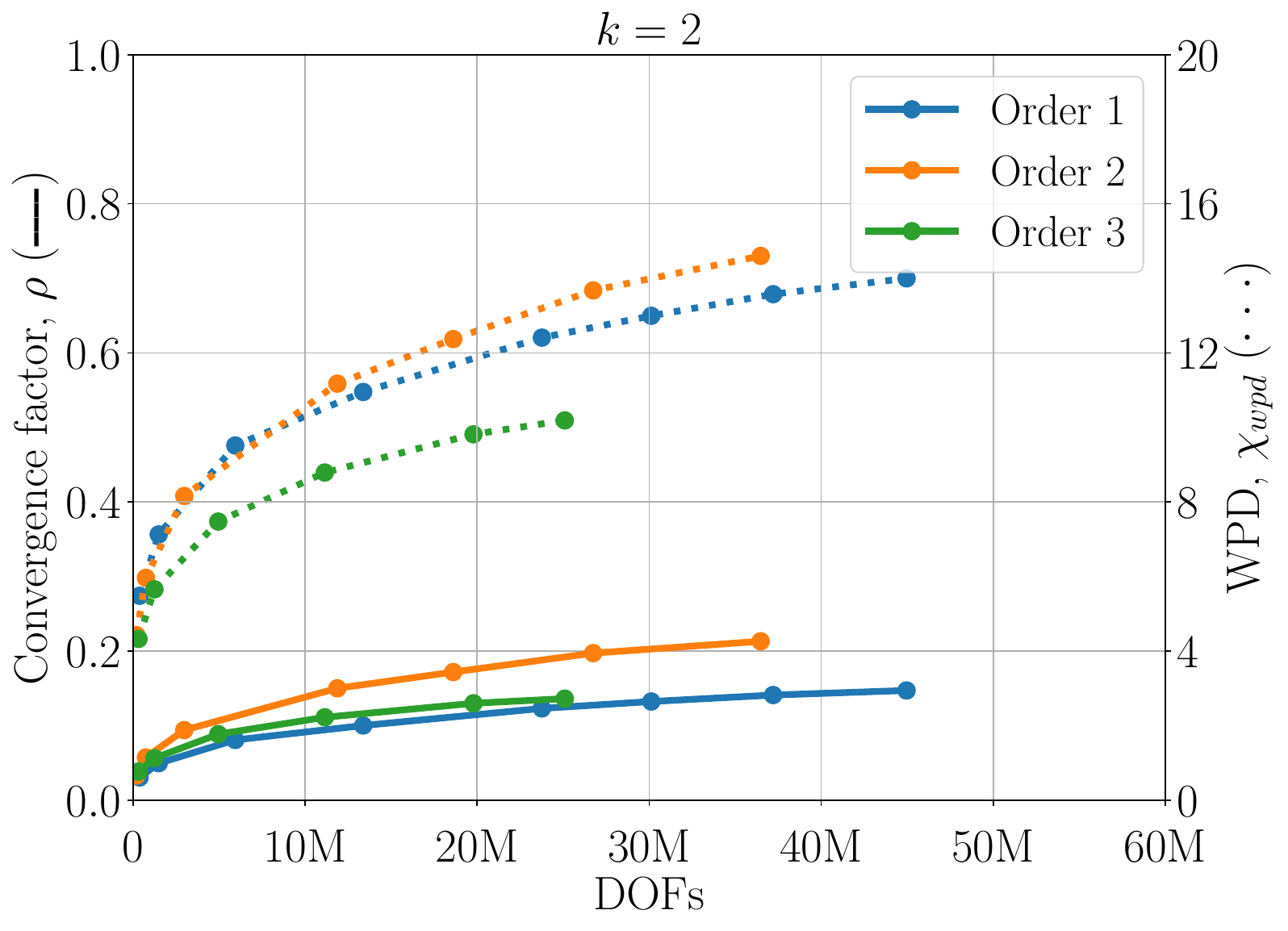}
  \caption{Scaling of convergence factor (solid lines) and WPD (dotted lines) as a function of DOFs,
  for block-$n$AIR applied to upwind DG discretizations of the inset problem, with $n$AIR degrees $k=2$ and $3$,
  V-cycles, finite-element degrees 1--3, and corresponding block sizes 3, 6, and 10.}
  \label{fig:blockscale}
\end{figure}

\subsection{Non-triangular matrices} \label{sec:results:extra}

Finally, it is well-known that a Neumann approximation for ideal operators is, in general, not effective for non-triangular matrices, such as a symmetric
discretization of diffusion. However, for ``nearly triangular'' matrices, $n$AIR remains an effective solver. Here, we demonstrate the performance
of $n$AIR on three such problems in Table \ref{tab:extra}.

{\color{black}
\textbf{P1} corresponds to a streamline upwind Petrov-Galerkin (SUPG) discretization
of \eqref{eq:transport} on the \textit{inset} problem, using linear finite elements. SUPG discretizations use an upwinded
scheme for advection, with a small diffusive component added for stability \cite{Brooks:1982bl}. This results in a small, global, symmetric
component added to a triangular matrix. \textbf{P2} and \textbf{P3} correspond to 4th-order, upwind DG discretizations of
\eqref{eq:transport} in 2d, on high-order curvilinear finite elements \cite{curve}. Here, $c(x,y) = 2xy + 2x^2 + 1.2$, $\mathbf{b}(x,y)
= (1/\sqrt{3}, 1/\sqrt{3})$, and $q(x,y)$ is the right-hand side corresponding
to the exact solution $u = (x^2 + y^2 + 1)/2 + \cos(2.5(x+y))  ({b}_0^2 + {b}_1)$.
Curvilinear finite elements can be non-convex and produce
cycles in the mesh, wherein the resulting matrix for a fixed direction of flow is mostly block triangular, with some number of
strongly connected components (SCCs) that are non-triangular and large in magnitude \cite{sweep18}.
\textbf{P2} has 97 SCCs of size two and 15 SCCs
of 3--6 DOFs. \textbf{P3} has 40 SCCs of size two, 11 small SCCs with 3--6 DOFs, and one large SCC consisting of 1951 DOFs,
implying there is a substantial non-triangular block in the matrix.

It should be noted that even in serial, solving such a problem algebraically is nontrivial, particularly when there is a global symmetric 
component. One approach is to use a lower-triangular preconditioner that inverts the advective components exactly, equivalent to an
ordered Gauss-Seidel iteration. This would likely converge well; however, without geometric knowledge of the velocity field and
corresponding ordering of DOFs in the matrix, the right relaxation ordering cannot be easily determined. In \cite{sweep18}, a
cycle-breaking strategy was proposed to approximate an optimal relaxation ordering in the context of larger transport simulations on 
meshes with curvilinear elements. This proved successful on linear systems with SCCs, but convergence of the larger transport ``source iteration''
was $2-3\times$ slower than using an exact solve, and this approach does not overcome the inherent limitation of Gauss-Seidel in
parallel. 

Each of these problems is nearly triangular in a different regard. Here, we apply one $n$AIR V-cycle as a preconditioner for
GMRES, and results in Table \ref{tab:extra} show $n$AIR to be an effective preconditioner in all cases. 
}

\begin{table}[!h]
\centering
\begin{tabular}{|c | r r r r r |}\toprule
Problem & $n$ & $k$ & $\rho$ & CC & $\chi_{\textnormal{WPD}}$ \\ \midrule
\textbf{P1} & 1553001 & 1 & 0.57 & 8.56 & 35.74 \\\midrule
\textbf{P2} & 88800 & 1 & 0.22 & 4.52 & 6.85 \\
\textbf{P2} & 88800 & 2 & 0.14 & 5.63 & 6.71 \\\midrule
\textbf{P3} & 88800 & 1 & 0.46 & 4.54 & 13.59 \\
\textbf{P3} & 88800 & 2 & 0.35 & 5.72 & 12.62 \\\bottomrule 
\end{tabular} 
\caption{Results of $n$AIR-preconditioned GMRES applied to the nearly-triangular discretizations \textbf{P1}, \textbf{P2}, and \textbf{P3},
corresponding to an SUPG discretization of the inset problem, and 4th-order DG discretizations of a variation in \eqref{eq:transport} on
two meshes with curvilinear elements, respectively (see text for details).}
\label{tab:extra}
\end{table}

Classical AMG with GMRES acceleration does not
converge on either problem. NSSA using GMRES acceleration and Jacobi relaxation converges for both
problems. For the harder problem, \textbf{P}$_3$, NSSA converges with factors on the order of $\rho = 0.78$ for a $V(2,2)$-cycle and
$\rho = 0.68$ for a $V(3,3)$-cycle, with cycle complexities likely on the order of 8-9 and 11, respectively, \tcb{and corresponding WPDs
of 66 and 79}.\footnote{ML does not detail complexity in the same manner as PyAMG, so these estimates are for comparison purposes
between NSSA and nAIR.} However, this convergence is
sensitive to parameters. For example, reasonable convergence is attained for an aggregation threshold of $\phi = 0.2$, but NSSA
does not converge for $\phi = 0.3$, and iterations are halted due to a singular GMRES Hessenberg matrix for $\phi = 0.1$.

\section{Conclusions}\label{sec:conclusions}

This work studies reduction-based AMG for highly nonsymmetric matrices. Theory is developed indicating that, along with accurate
approximations of the ideal operators, a scalable method also requires that interpolation or restriction satisfy a classical multigrid
approximation property. A reduction-based AMG method is then developed, denoted $n$AIR, which is shown to be an effective solver
for upwind discretizations. Strong convergence factors are shown when $n$AIR is applied to the steady state transport equation, on
multiple domains,  with high-order upwind discretizations, and unstructured meshes or meshes with curvilinear elements. Although $n$AIR as presented here
proves robust for several ``nearly triangular'' problems, when significant non-triangular components are introduced, the performance
of $n$AIR will quickly degrade, and the more general variation, $\ell$AIR \cite{air2}, is more appropriate.

A serial implementation of $n$AIR is available in the PyAMG library \cite{Bell:2008}, available at \url{https://github.com/ben-s-southworth/pyamg/tree/air},
and a parallel implementation is now available in the \textit{hypre} library \cite{Falgout:2002vu} (\url{https://github.com/hypre-space/hypre}). 

\section*{Acknowledgment} 
The authors thank Steve McCormick for helpful discussions in the development of $n$AIR. 

\bibliographystyle{siamplain}
\bibliography{air.bib}

\end{document}